\documentclass[12pt]{amsart}
\usepackage{amsmath,amsthm,amssymb}
\usepackage{mathptmx,mathrsfs,enumerate,url}
\usepackage{comment}
\usepackage{datetime}

\usepackage[backend=biber,style=numeric,sorting=nyvt]{biblatex}
\addbibresource{bibs/ref.bib}

\usepackage[all]{xy}

\newcommand{\dd}[1]{\[\xymatrix{#1}\]}

\usepackage{tikz-cd}

\usepackage{extpfeil}
\usepackage{mathtools}

\newtheorem{theorem}{Theorem}[section]
\newtheorem{prop}[theorem]{Proposition}
\newtheorem{corollary}[theorem]{Corollary}
\newtheorem{lem}[theorem]{Lemma}
\newtheorem{introtheorem}{Theorem}

\theoremstyle{definition}
\newtheorem{definition}[theorem]{Definition}
\newtheorem{remark}[theorem]{Remark}

\newenvironment{acknowledgements}%
	{
   \begin{center}%
	\scshape Acknowledgments
   \end{center}\vspace{6pt}
   }%

\def\bZ{{\mathbb Z}}
\def\bQ{{\mathbb Q}}

\def\Qp{{{\mathbb Q}_p}}

\def\CalO{{\mathcal O}}

\def\End{{\mbox{End}}}
\def\Aut{{\mbox{Aut}}}

\def\Out{{\mbox{Out}}}
\def\Gal{{\mbox{Gal}}}

\def\Nm{{\mathrm{Nm}}}

\def\defeq{ \ {\stackrel{\mathrm{def}}{=}} \ }
\def\nswaut{{{}^*\alpha}}

\title[On Outer Automorphism Groups]{On the Outer Automorphism Groups of the Absolute Galois Groups of 2-adic local Fields}

\author[Yu Nishio]{Yu Nishio \\ \\ December 2025}
\address[Yu Nishio]{}
\email{nishio.math@gmail.com}

\subjclass[2020]{11S20}

\keywords{mixed-characteristic local field, absolute Galois group, 
anabelian geometry, mono-anabelian geometry, group of MLF-type, 2-adic local field, 2-adic Galois representation, Hodge-Tate, Aut-intrinsically Hodge-Tate}

\begin{document}

\begin{abstract}
In the present paper, we study the outer automorphism groups of the absolute Galois groups of 2-adic local fields from the point of view of anabelian geometry. Let us recall that it is well-known that the natural homomorphism from the automorphism group of a mixed-characteristic local field to the outer automorphism group of the absolute Galois group of the given mixed-characteristic local field is injective. 
Moreover, Hoshi and the author of the present paper proved that, for absolutely abelian mixed-characteristic local fields with odd residue characteristic $p$ and even extension degree over $\bQ_p$, this subgroup arising from field automorphisms is not normal in the outer automorphism group and has infinitely many distinct conjugates. 
As a result in this direction, one of the main results of the present paper is the assertion that if a 2-adic local field satisfies certain conditions, then the set of conjugates of the subgroup arising from field automorhisms in the outer automorphism group is infinite, which thus implies that this subgroup is not normal in the outer automorphism group. 
On the other hand, for an odd prime number $p$, Hoshi proved the existence of an irreducible Hodge-Tate $p$-adic representation of dimension two of the absolute Galois group of a $p$-adic local field and an automorphism of the absolute Galois group such that the $p$-adic Galois representation obtained by pulling back the given $p$-adic Galois representation by the given automorphism is not Hodge-Tate. 
As a result in a direction that is different from the above direction, another main result of the present paper is the existence of a pair of a representation and an automorphism that is
 similar to the above pair in the case where $p=2$.  
\end{abstract}

\maketitle

\setcounter{section}{-1}

\section*{Introduction}
Let $p$ be a prime number, $k$ a finite extension of 
$\mathbb{Q}_p$, and $\overline{k}$ an algebraic closure 
of $k$. Write $G_k \stackrel{\mathrm{def}}{=} \mathrm{Gal}
(\overline{k} / k)$ for the absolute Galois group of $k$ 
determined by the algebraic closure $\overline{k}$ 
and $\mathrm{Out}(G_k)$ for the group of outer 
automorphisms of the group $G_k$. 
In the present paper, we study the outer automorphism group $\mathrm{Out}(G_k)$ from the point of view of anabelian geometry. Write $\mathrm{Aut}(k)$ for the group of automorphisms of the field $k$. Thus, we have a natural homomorphism 
$\mathrm{Aut}(k) \to \mathrm{Out}(G_k)$ of groups. 
Let us first recall that it is well-known [cf., e.g., \cite{Hoshi1}, 
Proposition 2.1] that this homomorphism is injective. In 
the present paper, let us regard $\mathrm{Aut}(k)$ as a 
[necessarily finite] subgroup of $\mathrm{Out}(G_k)$ by 
means of this injective homomorphism: 
\dd{
\mathrm{Aut}(k) \subseteq \mathrm{Out}(G_k). 
} 
Here, we note that it is well-known [cf., e.g., the 
discussion given at the final portion of \cite{NSW}, Chapter VII, 
\S 5; \cite{Hoshi-Nishio}, Corollary 1.6, (iv)] that, if $p$ is odd, then the equality $\mathrm{Aut}(k) = \mathrm{Out}(G_k)$ never holds.
In particular, one may conclude that 
a finite extension of $\bQ_p$ 
should be considered to be ``not anabelian'' [cf.\ also \cite{NSW}, Chapter XII, \S 2, Closing remark]. 
Therefore, one main interest from the point of view of anabelian
geometry is in the investigation of the ``difference'' between 
$\mathrm{Aut}(k)$ and $\mathrm{Out}(G_k)$. 
Some results concerning the ``characterization'' of the subgroup 
$\mathrm{Aut}(k)$ of $\mathrm{Out}(G_k)$ may be found in 
\cite{Mzk}, \S 3, and \cite{Hoshi-b}, \S 3. Moreover, some results concerning this ``difference'' may be found in 
\cite{Hoshi2}, 
\cite{Hoshi-Nishio}, and \cite{kondo} for odd prime numbers $p$. 
On the other hand, no similar results were known in the case where $p=2$. 
Thus, it is a natural question to ask how far $\Aut(k)$ differs from $\Out(G_k)$ in the case where $p=2$. The present paper gives a certain answer to this question.
Write $(\mathbb{Q}_p)_+ \subseteq k_+$ for the underlying 
additive modules of the fields $\mathbb{Q}_p \subseteq k$, 
respectively. Next, let us recall that, by applying a 
functorial group-theoretic reconstruction algorithm 
established in the study of the mono-anabelian geometry of mixed-characteristic local fields [cf., 
e.g., \cite{Hoshi1}, Definition 3.10, (vi), and \cite{Hoshi1}, 
Proposition 3.11, (iv)], one obtains an action of the group 
$\mathrm{Out}(G_k)$ on the module $k_+$ whose restriction 
to the subgroup $\mathrm{Aut}(k) \subseteq 
\mathrm{Out}(G_k)$ coincides with the natural action of 
$\mathrm{Aut}(k)$ on $k_+$.

One main technical result of the present paper is as 
follows [cf.\ Theorem \ref{one:theorem:2.8}]:

\begin{introtheorem}
\label{AAAAAAA}
Suppose that the following two conditions are 
satisfied: 
\begin{itemize}
\item[$(1)$]
The field $k$ contains a primitive $4$-th root of unity. 
\item[$(2)$]
The finite extension $k / \mathbb{Q}_2$ is Galois, 
and, moreover, the Galois group 
$\mathrm{Gal}(k / \mathbb{Q}_2)$ is abelian. 
\end{itemize}
Then there exists an outer automorphism $\alpha$ of $G_k$ 
such that, for each nonzero integer $n$, if one writes 
$\alpha_+^n$ for the action of $\alpha^n$ on $k_+$, then 
$\alpha^n_+((\mathbb{Q}_2)_+) \neq (\mathbb{Q}_2)_+$. 
\end{introtheorem}

Moreover, in the present paper, we prove the 
following result [cf.\ Theorem \ref{two:theorem:2.8}]:

\begin{introtheorem}
\label{BBBBBBB}
Suppose that the two conditions in the statement of 
Theorem~\ref{AAAAAAA} are satisfied. Then the set of 
$\mathrm{Out}(G_k)$-conjugates of the subgroup 
$\mathrm{Aut}(k) \subseteq \mathrm{Out}(G_k)$ is 
infinite. 
\end{introtheorem}

A formal consequence of Theorem~\ref{BBBBBBB} is as 
follows [cf.\ Corollary \ref{corollary:2.9}]: 

\begin{introtheorem}
\label{CCCCCCC}
Suppose that the two conditions in the statement of 
Theorem~\ref{AAAAAAA} are satisfied. 
Then the following hold: 
\begin{itemize}
\item[$(\mathrm{i})$]
The subgroup $\mathrm{Aut}(k) \subseteq 
\mathrm{Out}(G_k)$ is not normal. 
\item[$(\mathrm{ii})$]
There exist infinitely many distinct 
subgroups of $\mathrm{Out}(G_k)$ isomorphic 
to $\mathrm{Aut}(k)$. 
\end{itemize}
\end{introtheorem}

The final main result of the present paper is as follows [cf. Corollary \ref{cor:4.4}]:

\begin{introtheorem}
\label{DDDDDDD}
There exist a finite extension $K$ of $\bQ_2$, an algebraic closure $ \overline{K}$ of $K$, a $\mathbb{Q}_2$-vector space $V$ of dimension 2, and a continuous representation $\rho : \mathrm{Gal}(\overline{K}/K) \to \mathrm{Aut}_{\mathbb{Q}_{p_K}}(V)$ that is irreducible, abelian, crystalline [hence also Hodge-Tate], but not Aut-intrinsically Hodge-Tate [cf. \cite{Hoshi_intrinsic_Hodge-Tate}, Definition 1.3]. 
\end{introtheorem}

In \cite{kondo}, Kondo proved similar results to Theorem \ref{AAAAAAA}, Theorem \ref{BBBBBBB}, Theorem \ref{CCCCCCC}, and Theorem \ref{DDDDDDD} in the case where $k$ is absolutely Galois [i.e., where the extension $k/\bQ_p$ is Galois], and $p$ is odd. 
Moreover, in \cite{kumpitsch}, Kumpitsch studied the outer automorphism groups of the absolute Galois groups of mixed-characteristic local fields from the point of view of the study of mapping class groups and constructed large subgroups of the outer automorphism groups.

\section{Notational conventions}
\label{section0}

\subsubsection*{\sc Topological groups} 

If $G$ is a topological group, then we shall write $G^{\rm ab}$ for the \textit{abelianization} of $G$ [i.e., the quotient of $G$ by the closure of the commutator subgroup
of $G$], $G^{\textrm{ab-tor}}$ for the closure of the subgroup of torsion elements of $G^{\rm ab}$, and $G^{\rm ab/tor}$ for the quotient of $G^{\rm ab}$ by $G^{\textrm{ab-tor}}$. 
If $H$ is a profinite group, and $p$ is a prime number, then we shall write
$H^{(p)}$ for the \textit{maximal pro-$p$ quotient} of $H$.

\subsubsection*{\sc Rings}

If $R$ is a ring, then we shall write $R_+$ for the underlying additive module of $R$ and $R^\times$ for the multiplicative group of units of $R$.  

\subsubsection*{\sc Modules}

Let $M$ 
be a module. We shall write 
\[
M^{\wedge} \stackrel{\mathrm{def}}{=} \varprojlim\, M/(n\cdot M)
\]
--- where the projective limit is taken over the positive integers $n$.

\subsubsection*{\sc Fields} 

We shall refer to a field isomorphic to a finite extension of $\Qp$, for some prime number $p$, as a {\it mixed-characteristic local field}.  

If $k$ is a mixed-characteristic local field, then we shall write 
\begin{itemize}
 \item  $k^{(d=1)} \subseteq k$ for the [uniquely determined] minimal mixed-characteristic local field contained in $k$,
 \item $f_k$ for the absolute residue degree of $k$, 
 \item $d_k \defeq [k:k^{(d=1)}]$ for the degree of the finite extension $k/k^{(d=1)}$,  
 \item $\CalO_k \subseteq k$ for the ring of integers of $k$, 
 \item $\mathfrak{m}_k \subseteq \CalO_k$ for the maximal ideal of $\CalO_k$, 
 \item $\CalO^{\prec}_k \defeq 1+\mathfrak{m}_k \subseteq \CalO^{\times}_k$, 
 \item $p_k$ for the residue characteristic of $k$, and 
 \item $a_k$ for the largest nonnegative integer such that $k$ contains a primitive $p^{a_k}_k$-th root of unity.
\end{itemize}

We shall refer to a group isomorphic to the absolute Galois group of 
a mixed-characteristic local field as a group {\it of MLF-type} 
[cf.\ \cite{Hoshi4}, Definition 1.1].    
In the present paper, let us always regard a group of MLF-type as a 
profinite group by means of the profinite topology 
discussed in \cite{Hoshi4}, Proposition 1.2, (i)
[cf.\ also \cite{Hoshi4}, Proposition 1.2, (ii)].

\section{Existence of an automorphism with a certain unipotency condition of a group of  MLF-type corresponding to a 2-adic local field containing primitive 4-th roots of unity}
\label{section1}

In the present \S \ref{section1}, we prove that a certain group of MLF-type admits an automorphism that
satisfies a certain unipotency condition [cf.\ Theorem~\ref{theorem:1.5}
below].
In the present \S \ref{section1}, let $G$ be a group of MLF-type.  
Thus, by applying the various functorial group-theoretic reconstruction algorithms of \cite{Hoshi1}, \S 3 [cf.\ \cite{Hoshi1}, Definition 3.5, (i), (ii), (iii); \cite{Hoshi1}, Definition 3.10, (ii), (iv), (vi)], to  the group $G$ of MLF-type, we obtain
\begin{itemize}
 \item a prime number $p(G)$,
 \item positive integers $d(G)$ and $f(G)$, 
  \item a nonnegative integer $a(G) \defeq \log_{p(G)} \left( \sharp \left( \left(k^{\times}(G)_{\mathrm{tor}}\right)^{(p(G))} \right) \right)$
 \item a normal closed subgroup $P(G) \subseteq 
 G$ of $G$, and
 \item topological modules $\CalO^{\prec}(G) \subseteq k^\times (G)$ and $k_+(G)$ 
\end{itemize}
[cf.\ also \cite{Hoshi1}, Summary 3.15]. 
Here, let us recall [cf.\ \cite{Hoshi1}, Proposition 3.6; 
\cite{Hoshi1}, Proposition 3.11, (i), (iv); \cite{Hoshi2}, Proposition 2.5] that if $k$ is a 
mixed-characteristic local field, 
$\overline{k}$ is an algebraic closure of $k$, 
and $G_k \defeq \mathrm{Gal}(\overline{k}/k)$ 
is the absolute Galois group of $k$ 
determined by the algebraic closure $\overline{k}$, then 
\begin{itemize}
 \item $p(G_k)$, $d(G_k)$, $a(G_k)$, $f(G_k)$, $P(G_k)$ coincide with
 $p_k$, $d_k$, $a_k$, $f_k$, the wild inertia subgroup of $G_k$, 
 respectively, and, moreover, 
 \item there exist functorial isomorphisms $\CalO^{\prec}_k \stackrel{\sim}{\to} \CalO^{\prec}(G_k)
 $, $k^\times\stackrel{\sim}{\to} k^\times (G_k) $, and $k_+
 \stackrel{\sim}{\to} k_+(G_k)$.  
\end{itemize}
Furthermore, let us also recall [cf.\ 
also 
\cite{Hoshi1}, Proposition 3.6; 
\cite{Hoshi1}, Definition 3.10, (i), (ii), (vi);
\cite{Hoshi1}, Proposition 3.11, (i)]
that 
\begin{itemize}
 \item 
the module $\CalO^{\prec}(G)$ is defined 
to be the image of $P(G) \subseteq G$ in $G^{\rm ab}$, and 
 \item
the module $k_+(G)$ is defined to be 
$\CalO^{\prec}(G) \otimes_{\mathbb{Z}} \mathbb{Q}$, i.e., 
the perfection [cf., e.g., the discussion entitled ``Monoids'' of 
\cite{Hoshi2}, \S 0] of the module $\CalO^{\prec}(G)$ [cf.\ also 
the commutative diagram 
\dd{
\mathcal{O}_k^{\prec}
\ar[r]^{\log_k} 
\ar[d]_\wr &
k_+
\ar[d]_\wr 
\\
\mathcal{O}^{\prec}(G_k)
\ar[r]&
k_+(G_k)
}
determined by the commutative diagram of 
\cite{Hoshi1}, Proposition 3.11, (iv)].  
\end{itemize}

In the present \S \ref{section1}, suppose, moreover, that $p(G) = 2$, and that $a(G) \geq 2$. 

\begin{prop}[Diekert]\label{proposition:1.1} 
There exist 
\begin{itemize}
 \item
topological generators 
$\sigma$, $\tau$, $x_0, \dots, x_{d(G)}$ of $G$, 
 \item
positive integers $s$, $g$ such that $g \neq 1$, and 
\end{itemize}
that satisfy the following three conditions:  
\begin{enumerate}
 \item[\rm (1)]  
 The normal closed subgroup $P(G)$ of
 $G$ is topologically normally generated by $x_0, \dots, x_{d(G)}$.
 \item[\rm (2)] The equality $\sigma \tau \sigma^{-1} = \tau^{q(G)}$ holds, where we write $q(G) \defeq 2^{f(G)}$.
 \item[\rm (3)] The equality 
\dd{
\sigma x_0 \sigma^{-1}
=
(x_0 \tau)^{\pi g}
x_1^{2 ^{s}}
\left[x_1,x_2\right] \cdots \left[x_{d(G)-1},x_{d(G)}\right]
}
holds, where we write $\pi$ for the unique element of $\hat{\bZ} = \prod_p \bZ_p$ whose image in $\bZ_p$ is given by $1$ if $p = 2$ $($resp.\ by $0$ if $p\neq 2 $$)$.  
\end{enumerate}
\end{prop}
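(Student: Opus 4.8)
The plan is to deduce the statement from the explicit presentation of the absolute Galois group of a $2$-adic local field established by Diekert. First I would use the fact that $G$ is a group of MLF-type with $p(G) = 2$ to fix a $2$-adic local field $k$ together with an isomorphism $G \stackrel{\sim}{\to} G_k$; this is possible by the very definition of MLF-type. Under such an isomorphism the group-theoretically reconstructed invariants recalled above translate into the corresponding field-theoretic invariants: $d(G) = d_k = [k : \bQ_2]$, $f(G) = f_k$ (so that $q(G) = 2^{f(G)}$ is the cardinality of the residue field of $k$), $P(G)$ is identified with the wild inertia subgroup of $G_k$, and the hypothesis $a(G) \geq 2$ means precisely that $a_k \geq 2$, i.e., that $k$ contains a primitive $4$-th root of unity.

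The key input is then Diekert's presentation of $G_k$, which, in the case $\mu_4 \subseteq k$, exhibits $G_k$ on topological generators consisting of a (tame) Frobenius lift $\sigma$, a generator $\tau$ of the tame quotient of inertia, and generators $x_0, \dots, x_{d_k}$ of the wild inertia, subject to relations of exactly the displayed shape. I would then match the relations one by one: condition (1) is the assertion that the $x_i$ topologically normally generate the wild inertia, which is built into Diekert's description of $P(G)$; condition (2) is the standard tame relation $\sigma \tau \sigma^{-1} = \tau^{q(G)}$ encoding the Frobenius action on tame inertia; and condition (3) is Diekert's single ``wild'' relation, in which the integers $s$ and $g$ are the explicit numerical parameters furnished by his theorem. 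The symbol $\pi$ is inserted only to make the formula uniform across residue characteristics; since here $p(G) = 2$, its image in $\bZ_2$ equals $1$, so the factor $(x_0\tau)^{\pi g}$ is genuinely present with exponent $g$ in the $2$-adic direction.

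Since the claim is only an existence statement, it suffices to transport Diekert's generators and relations back along the chosen isomorphism $G_k \stackrel{\sim}{\to} G$. The main obstacle I anticipate is bookkeeping rather than conceptual: Diekert's theorem is stated with particular normalizations (for the choice of uniformizer, of Frobenius lift, and of the generators of the wild part), and one must verify carefully that the hypothesis $a(G) \geq 2$ --- equivalently $\mu_4 \subseteq k$ --- is exactly the condition under which his presentation collapses to the clean one-relator form written in (3), rather than the more involved form occurring when $\sqrt{-1} \notin k$ (the ``non-orientable'' Demushkin-type case). Checking that the exponent $2^s$ and the parameter $g$ in Diekert's formula correspond to the quantities named in the statement, and in particular that $g \neq 1$ follows from $a_k \geq 2$, is the one place where I would have to reproduce the relevant part of Diekert's argument in detail.
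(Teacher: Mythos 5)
Your proposal is correct and follows essentially the same route as the paper, whose entire proof is the citation of Diekert's presentation theorem [\cite{Diekert1984}, Theorem 3.1] combined with the comparison of group-theoretic and field-theoretic invariants [\cite{Hoshi1}, Proposition 3.6; \cite{Hoshi2}, Proposition 2.5, (i)] --- i.e., exactly your strategy of choosing an isomorphism $G \stackrel{\sim}{\to} G_k$, identifying $d(G)$, $f(G)$, $a(G)$, $P(G)$ with $d_k$, $f_k$, $a_k$, and the wild inertia subgroup, and transporting Diekert's generators and relations back along that isomorphism. Your observation that $a(G) \geq 2$ (equivalently $\mu_4 \subseteq k$) is precisely the hypothesis selecting the relevant case of Diekert's theorem, and your flagging of the parameters $s$, $g$ (including $g \neq 1$) as data furnished by that theorem, match the paper's implicit bookkeeping.
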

\begin{proof}
This assertion follows from \cite{Diekert1984}, Theorem 3.1, and \cite{Hoshi1}, Proposition 3.6, \cite{Hoshi2}, Proposition 2.5, (i). 
\end{proof}

In the remainder of the present \S \ref{section1}, let us fix 
topological generators $\sigma$, $\tau$, $x_0, \dots, x_{d(G)}$ of 
$G$ as in Proposition \ref{proposition:1.1}. Write $S \defeq \{0, 
1, \dots, d(G)\}$. 
Moreover, for each $i \in S$, write 
\begin{itemize}
\item
$y_i \in k_+(G)$ for the image of $x_i \in P(G)$ 
[cf.\ the condition (1) of Proposition \ref{proposition:1.1}]
in $k_+(G)$ 
and 
\item
$z_i \in \CalO^{\prec}(G)^{\rm ab/tor}$ for the image 
of $x_i  \in P(G)$ 
[cf.\ the condition (1) of Proposition \ref{proposition:1.1}]
in $\CalO^{\prec}(G)^{\rm ab/tor}$ 
\end{itemize}
[cf.\ the constructions of $\CalO^{\prec}(G)$, $k_+(G)$ 
explained in the discussion 
preceding Proposition \ref{proposition:1.1}]. 

\begin{lem}\label{lemma:1.2} 
The topological module $k_{+}(G)$ has a natural structure of ${\bQ}_{2}$-vector space of dimension $d(G)$. 
\end{lem}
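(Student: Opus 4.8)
The plan is to treat separately the two assertions---that $k_+(G)$ carries a natural $\bQ_2$-vector space structure, and that its dimension is $d(G)$---and to reduce the dimension count to the concrete situation of an absolute Galois group.

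First I would pin down the vector space structure. Recall that $P(G)$ is the group-theoretic counterpart of the wild inertia subgroup, hence a pro-$2$ group, so its continuous image $\CalO^{\prec}(G)$ in $G^{\rm ab}$ is a pro-$2$ abelian topological group, i.e.\ a topological $\bZ_2$-module. For any $\bZ_2$-module $M$ each prime $\ell\neq 2$ acts invertibly, whence $M\otimes_{\bZ}\bQ = M\otimes_{\bZ_2}\bQ_2$ (using $\bZ_2\otimes_{\bZ}\bQ = \bQ_2$); applying this with $M=\CalO^{\prec}(G)$ exhibits $k_+(G)=\CalO^{\prec}(G)\otimes_{\bZ}\bQ$ as a $\bQ_2$-vector space. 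Since every step here is functorial in the group $G$ of MLF-type, this is the asserted \emph{natural} structure.

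Next I would compute the dimension. Both $d(G)$ and the pair consisting of $k_+(G)$ together with its $\bQ_2$-structure are reconstructed functorially, so $\dim_{\bQ_2}k_+(G)$ and $d(G)$ are invariant under isomorphisms of groups of MLF-type; it therefore suffices to verify the equality for $G=G_k$, where $k$ is a mixed-characteristic local field with $p_k=2$. Here the functorial isomorphism $k_+\stackrel{\sim}{\to}k_+(G_k)$ recalled before Proposition~\ref{proposition:1.1} is $\bQ_2$-linear---this being the content of the commutative square involving the $\bZ_2$-linear map $\log_k$---so the computation reduces to $\dim_{\bQ_2}k_+$, where $k_+$ is the additive group of $k$. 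Since the minimal mixed-characteristic local field $k^{(d=1)}$ contained in $k$ is $\bQ_2$, one gets $\dim_{\bQ_2}k_+=[k:\bQ_2]=d_k=d(G_k)=d(G)$. (Alternatively, I could avoid $k_+$ itself and apply the structure theorem $\CalO^{\prec}_k=1+\mathfrak{m}_k\cong\mu_{2^{a_k}}(k)\times\bZ_2^{d_k}$ for the principal units, whose torsion-free rank $[k:\bQ_2]=d_k$ passes to the perfection.)

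The main obstacle I anticipate is bookkeeping rather than conceptual: I must make sure the intrinsic $\bQ_2$-structure on $k_+(G)$ and the $\bQ_2$-structure on the additive group $k_+$ genuinely agree under the functorial isomorphism---i.e.\ verify $\bQ_2$-linearity through the $\log_k$ diagram---and I must use the correct identification $k^{(d=1)}=\bQ_2$, so that the relevant degree is $[k:\bQ_2]=d_k$ and not the residue degree $f(G)$. As a consistency check, the $d(G)+1$ images $y_0,\dots,y_{d(G)}$ of the generators $x_i$ span $k_+(G)$, giving the a priori bound $\dim_{\bQ_2}k_+(G)\le d(G)+1$, which the relation in Proposition~\ref{proposition:1.1}(3) must cut down to $d(G)$.
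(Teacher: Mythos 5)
Your proof is correct and follows essentially the same route as the paper, which simply cites \cite{Hoshi1}, Definition~3.10, (vi), for the $\bQ_2$-structure and \cite{Hoshi1}, Proposition~3.6, together with \cite{Hoshi1}, Proposition~3.11, (iv), for the dimension count; your argument just unpacks those citations (pro-$2$-ness of $P(G)$ giving the $\bZ_2$-module structure, and the functorial $\bQ_2$-linear comparison $k_+\stackrel{\sim}{\to}k_+(G_k)$ with $\dim_{\bQ_2}k_+=[k:\bQ_2]=d_k$). The extra care you take about $\bQ_2$-linearity through the $\log_k$ diagram and the identification $k^{(d=1)}=\bQ_2$ is exactly what those cited results provide.
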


\begin{proof}
It follows immediately from the definition of $k_{+}(G)$ [cf.\ \cite{Hoshi1}, Definition 3.10, (vi)] that $k_{+}(G)$ has a natural structure of ${\bQ}_{2}$-vector space. Moreover, it follows from \cite{Hoshi1}, Proposition 3.6, and  \cite{Hoshi1}, Proposition 3.11, (iv), that this $\bQ_{2}$-vector space $k_{+}(G)$ is of dimension $d(G)$. 
\end{proof}

\begin{lem}\label{lemma:1.3} 
The $d(G)$ elements $y_1, \dots, y_{d(G)}$ form a basis of the $\bQ_{2}$-vector space $k_{+}(G)$ [cf.\ Lemma \ref{lemma:1.2}]. 
\end{lem}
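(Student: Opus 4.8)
The plan is to produce an explicit $\bQ_2$-linear relation expressing $y_0$ in terms of $y_1, \dots, y_{d(G)}$, and to combine this with the observation that $y_0, y_1, \dots, y_{d(G)}$ already span $k_+(G)$. Granting both, the $d(G)$ elements $y_1, \dots, y_{d(G)}$ span the $d(G)$-dimensional $\bQ_2$-vector space $k_+(G)$ [cf.\ Lemma~\ref{lemma:1.2}], and a spanning set whose cardinality equals the dimension is automatically a basis. To see that $y_0, \dots, y_{d(G)}$ span, I would argue as follows: since $P(G)$ is topologically normally generated by $x_0, \dots, x_{d(G)}$ [cf.\ condition (1) of Proposition~\ref{proposition:1.1}] and conjugation becomes trivial upon passing to $G^{\rm ab}$, the image $\CalO^{\prec}(G) \subseteq G^{\rm ab}$ of $P(G)$ is topologically generated by the images $\bar{x}_0, \dots, \bar{x}_{d(G)}$ of $x_0, \dots, x_{d(G)}$; applying $-\otimes_{\bZ}\bQ$ then shows that the images $y_0, \dots, y_{d(G)}$ span $k_+(G) = \CalO^{\prec}(G)\otimes_{\bZ}\bQ$ over $\bQ_2$.

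The essential input is the tameness relation (2) of Proposition~\ref{proposition:1.1}. Writing $\bar\tau$ for the image of $\tau$ in $G^{\rm ab}$, I would first abelianize this relation to obtain $\bar\tau = \bar\tau^{q(G)}$, i.e.\ $\bar\tau^{q(G)-1}=1$. Since $q(G)-1 = 2^{f(G)}-1$ is odd, $\bar\tau$ is a prime-to-$2$ torsion element of $G^{\rm ab}$. Consequently $\bar\tau^{\pi g} = 1$: indeed $\pi g \in \hat{\bZ}$ has vanishing component in $\bZ_\ell$ for every odd prime $\ell$, so $\pi g \equiv 0$ modulo the (odd) order of $\bar\tau$ by the Chinese Remainder Theorem.

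Next I would abelianize the relation (3) of Proposition~\ref{proposition:1.1}. The commutators $[x_1,x_2], \dots, [x_{d(G)-1}, x_{d(G)}]$ die in $G^{\rm ab}$, conjugation by $\sigma$ acts trivially, and—using that $\bar{x}_0$ and $\bar\tau$ commute in $G^{\rm ab}$ together with $\bar\tau^{\pi g}=1$—the right-hand side collapses: $(\bar{x}_0\bar\tau)^{\pi g}\bar{x}_1^{2^s} = \bar{x}_0^{\pi g}\,\bar\tau^{\pi g}\,\bar{x}_1^{2^s} = \bar{x}_0^{\pi g}\,\bar{x}_1^{2^s}$. Thus $\bar{x}_0 = \bar{x}_0^{\pi g}\,\bar{x}_1^{2^s}$ in $G^{\rm ab}$. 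Projecting this identity to $k_+(G)$ and using that $\CalO^{\prec}(G)$ is a pro-$2$ module—so that the $\hat{\bZ}$-action factors through $\bZ_2$, where $\pi g$ has image $g$—I obtain the $\bQ_2$-linear relation $y_0 = g\,y_0 + 2^s\,y_1$, that is, $(g-1)\,y_0 = -2^s\,y_1$. Since $g \neq 1$ [cf.\ Proposition~\ref{proposition:1.1}], the nonzero integer $g-1$ is invertible in $\bQ_2$, and hence $y_0 = -\tfrac{2^s}{g-1}\,y_1$ lies in the $\bQ_2$-span of $y_1, \dots, y_{d(G)}$, completing the reduction.

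The step I expect to require the most care is the bookkeeping of the $\hat{\bZ}$-exponent $\pi g$ through the successive quotients $G \twoheadrightarrow G^{\rm ab} \twoheadrightarrow k_+(G)$. On the one hand, one must verify that $\bar\tau^{\pi g}=1$, which is precisely where the definition of $\pi$ and the tameness relation (2) conspire; on the other hand, one must check that $\bar{x}_0^{\pi g}$ maps to $g\,y_0$ and not to some other multiple, which relies on $\CalO^{\prec}(G)$ being pro-$2$ so that $\pi g$ and $g$ have the same image in $\bZ_2$. Once these two points are settled, the remainder is the routine linear-algebra count described in the first paragraph.
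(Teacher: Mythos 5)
Your proof is correct and follows essentially the same route as the paper's: both abelianize relations (2) and (3) of Proposition~\ref{proposition:1.1} to eliminate $\tau$ and extract a nontrivial relation forcing $y_0$ into the span of $y_1$ (you compute it explicitly as $(g-1)\,y_0 = -2^s\,y_1$, where the paper records it as $1 = z_0^H z_1^{2^s}$ with $H \neq 0$), then conclude by the spanning argument and the dimension count of Lemma~\ref{lemma:1.2}. The only cosmetic difference is that the paper kills $\bar\tau$ by passing to $\mathcal{O}^{\prec}(G)^{\mathrm{ab/tor}}$, where it vanishes as a torsion element, whereas you verify $\bar\tau^{\pi g} = 1$ directly in $G^{\mathrm{ab}}$ via the oddness of $q(G)-1$ and the support of $\pi$ --- both correct, and your exponent bookkeeping (pro-$2$-ness of $\mathcal{O}^{\prec}(G)$ making $\pi g$ act as $g$) matches the paper's implicit use of the $\mathbb{Z}_2$-module structure.
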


\begin{proof}
Since ${\{x_i\} }_{i \in S}$ topologically normally generates $P(G)$ [cf.\ the condition (1) of Proposition \ref{proposition:1.1}], ${\{z_i\} }_{i \in S}$ topologically generates $\CalO^{\prec}(G)^{\rm ab/tor}$ $( \subseteq G^{\rm ab/tor} )$ [cf.\ \cite{Hoshi1}, Definition 3.10, (i), (ii)]. Moreover, since $\CalO^{\prec}(G)^{\rm ab/tor} \otimes_{\mathbb{Z}_{2}} (\mathbb{Z}_{2} / 2^n\mathbb{Z}_{2})$ is a finite $2$-group for every positive integer $n$ [cf.\ \cite{Hoshi1}, Proposition 3.11, (i)], ${\{z_i\} }_{i \in S}$ is also a generator of $\CalO^{\prec}(G)^{\rm ab/tor}$ even if we regard $\CalO^{\prec}(G)^{\rm ab/tor}$ as a $\bZ_{2}$-module. Next, let us observe that it follows from the condition (2) of Proposition \ref{proposition:1.1} that the image of $\tau$ in $G^{\rm ab/tor}$ is trivial. Thus, it follows from the condition (3) of Proposition \ref{proposition:1.1} that the relation $1={z_0}^{H}{z_1}^{{2}^s}$ in $\CalO^{\prec}(G)^{\rm ab/tor}$ holds for some nonzero [cf.\ the condition that $g \neq 1$ of Proposition \ref{proposition:1.1}] integer $H$. Therefore, if we write $T \defeq S \backslash \{0\}$, then ${\{z_i \otimes 1 \} }_{i \in T}$ is a generator of $\bQ_{2}$-vector space $\CalO^{\prec}(G)^{\rm ab/tor} \otimes_{\bZ_{2}}  \bQ_{2}$. Here, let us observe that we have a natural topological isomorphism $k_{+}(G) \stackrel{\sim}{\to} \CalO^{\prec}(G)^{\rm ab/tor} \otimes_{\bZ_{2}}  \bQ_{2}$, by definition, that maps $y_i \in k_{+}(G)$ to $z_i \otimes 1 \in  \CalO^{\prec}(G)^{\rm ab/tor} \otimes_{\bZ_{2}} \bQ_{2}$ for each $i \in S$. This isomorphism is also an isomorphism of $\bQ_{2}$-vector spaces by construction. Moreover, since $k_{+}(G) \simeq \CalO^{\prec}(G)^{\rm ab/tor} \otimes_{\bZ_{2}}  \bQ_{2}$ is a $\bQ_{2}$-vector space of dimension $d(G)$ [cf.\ Lemma \ref{lemma:1.2}], ${\{{z_i} \otimes 1 \} }_{i \in T}$ is a basis of the $\bQ_{2}$-vector space $\CalO^{\prec}(G)^{\rm ab/tor} \otimes_{\bZ_{2}} \bQ_{2}$. Therefore, it follows from the condition imposed on the isomorphism $k_{+}(G) \stackrel{\sim}{\to} \CalO^{\prec}(G)^{\rm ab/tor} \otimes_{\bZ_{2}} \bQ_{2}$ that ${\{y_i\} }_{i \in T}$ is a basis of the $\bQ_{2}$-vector space $k_{+}(G)$. 
\end{proof}

\begin{lem}\label{lemma:1.4} 
The element $y_{d(G)-1}$ is a $nonzero$ element of $k_{+}(G)$. 
\end{lem}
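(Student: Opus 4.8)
The plan is to distinguish two cases according to whether the index $d(G)-1$ is positive or zero, i.e., whether $d(G) \geq 2$ or $d(G) = 1$.

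In the case $d(G) \geq 2$, I would argue that $y_{d(G)-1}$ is in fact one of the basis vectors produced by Lemma \ref{lemma:1.3}. Indeed, $d(G) - 1 \geq 1$ means that $d(G) - 1 \in T = S \setminus \{0\}$, and Lemma \ref{lemma:1.3} asserts that $\{y_i\}_{i \in T}$ is a basis of $k_+(G)$; a basis vector is never zero, so the claim follows at once.

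The case $d(G) = 1$, in which $y_{d(G)-1} = y_0$ is no longer a basis vector, is the only one requiring real work. Here I would reuse the linear relation already established during the proof of Lemma \ref{lemma:1.3}. Passing to $G^{\rm ab/tor}$ --- where the image of $\tau$ is trivial by condition (2) and all commutators die --- condition (3) of Proposition \ref{proposition:1.1} collapses to $1 = z_0^{g-1} z_1^{2^s}$ in $\CalO^{\prec}(G)^{\rm ab/tor}$, using that $\pi$ acts as the identity on this $\bZ_2$-module. Tensoring with $\bQ_2$ and writing additively converts this into $0 = (g-1) y_0 + 2^s y_1$ in $k_+(G)$, so that $y_0 = -\tfrac{2^s}{g-1} y_1$ (legitimately, since $g \neq 1$). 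As $2^s$ is a nonzero element of $\bQ_2$ and $y_1$ is a basis vector of $k_+(G)$ by Lemma \ref{lemma:1.3}, the right-hand side is nonzero, whence $y_0 \neq 0$.

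I expect no serious obstacle: the substantive content is entirely contained in Lemma \ref{lemma:1.3}, and the only point demanding attention is the degenerate case $d(G)=1$, where one must read off the relation between $y_0$ and $y_1$ from condition (3) and note that the coefficient $2^s$ survives after inverting $2$.
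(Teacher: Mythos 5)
Your argument is correct, and its live branch coincides with the paper's proof; the difference is that your case distinction is unnecessary, because the case $d(G)=1$ cannot occur under the standing hypotheses of \S\ref{section1}. Indeed, throughout this section one assumes $p(G)=2$ and $a(G)\geq 2$, i.e., the corresponding field contains a primitive $4$-th root of unity; since the $2$-power roots of unity in $\bQ_2$ are just $\pm 1$, adjoining such a root of unity is a quadratic extension of $\bQ_2$, so $a(G)\geq 2$ forces $d(G)\geq 2$. This is exactly the paper's one-line proof: $d(G)-1\geq 1$ places $d(G)-1$ in $T=S\setminus\{0\}$, and Lemma \ref{lemma:1.3} exhibits $y_{d(G)-1}$ as a basis vector, hence nonzero --- your Case 1 verbatim. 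Your Case 2, while vacuous and therefore harmless, is also not self-supporting as written: the presentation of Proposition \ref{proposition:1.1} and Lemma \ref{lemma:1.3} are only established in this paper under the hypothesis $a(G)\geq 2$, which is incompatible with $d(G)=1$, so you could not legitimately invoke the relation $1=z_0^{\,g-1}z_1^{2^s}$ or the fact that $\{y_1\}$ is a basis in that regime (for $k=\bQ_2$, Diekert's presentation takes a different form). The internal computation there --- that $\pi$ acts as $1$ on the pro-$2$ module, that $g-1$ is a nonzero integer and hence invertible in $\bQ_2$, and that consequently $y_0=-\tfrac{2^s}{g-1}y_1\neq 0$ --- is sound modulo those inputs, and in fact mirrors the elimination of $z_0$ already carried out inside the proof of Lemma \ref{lemma:1.3}; but the cleaner route is simply to observe, as the paper does, that $a(G)>1$ implies $d(G)>1$.
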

\begin{proof} 
This assertion follows from Lemma \ref{lemma:1.3} [cf.\ also our assumption that $a(G) > 1$, hence $d(G) > 1$]. 
\end{proof}

\begin{definition} \label{alpha}
We shall write $\nswaut$ for the automorphism of $G$ 
defined by the equalities $\nswaut(\sigma) = \sigma$, $\nswaut(\tau) = \tau$, $\nswaut(x_{d(G)}) = x_{d(G)} x_{d(G)-1}$, and $\nswaut(x_i) = x_i$ for $i\in S \backslash \{d(G)\}$ . Since $\nswaut$ induces a bijection between the set of generators of $G$ and preserves the defining relations of $G$ [cf. Proposition \ref{proposition:1.1}], it indeed defines an automorphism of $G$.
\end{definition}

\begin{theorem}\label{theorem:1.5}
Let $G$ be a group of MLF-type such that $p(G) = 2$, and that $a(G) \geq 2$.  Then, for each nonzero integer $n$, if one writes $\nswaut^n_+$ for the automorphism of the $\mathbb{Q}_{2}$-vector space $k_+(G)$ induced by $\nswaut^n$ [cf. Definition \ref{alpha}], then $\nswaut^n_+ \neq \mathrm{id}$, and, moreover, the equality $(\nswaut^n_{+}- {\rm id})^2 = 0$ in the ring of endomorphisms of $k_+(G)$ holds.  
\begin{proof} 
First, we prove that $\nswaut_+^n \neq \mathrm{id}$ for each nonzero integer $n$. If $\nswaut_+^n = {\rm id}$, then $y_{d(G)} = \nswaut^n_+(y_{d(G)}) = y_{d(G)} + ny_{d(G)-1}$, which thus implies that $y_{d(G)-1} = 0$ in $k_{+}(G)$. However, this contradicts Lemma \ref{lemma:1.4}. Thus, we conclude that $\nswaut_+^n \neq {\rm id}$. Next, let us observe that, for each nonzero integer $n$, it follows from the easily verified equality $(\nswaut^n_+ - {\rm id})^2 (y_i) = 0$ for every $i \in S$ and Lemma \ref{lemma:1.3} that the equality $(\nswaut^n_+- {\rm id})^2 = 0$ in $\End(k_+(G))$ holds, as desired. This completes the proof of Theorem \ref{theorem:1.5}. 
\end{proof}
\end{theorem}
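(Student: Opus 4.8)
The plan is to reduce both assertions to an explicit computation of the induced endomorphism $\nswaut^n_+$ on the basis $\{y_i\}_{i \in T}$ of $k_+(G)$ furnished by Lemma~\ref{lemma:1.3}, where $T = S \setminus \{0\}$. The conceptual key is that the functorial assignment $P(G) \to k_+(G)$, $x_i \mapsto y_i$, factors through the abelianization $G^{\rm ab}$ and then through the perfection $\CalO^{\prec}(G) \otimes_{\mathbb{Z}} \mathbb{Q}$, so that the group law of $P(G)$ is carried to the additive structure of the $\mathbb{Q}_2$-vector space $k_+(G)$; in particular, a product of generators in $G$ descends to a $\mathbb{Z}$-linear combination of the corresponding $y_i$.

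First I would compute the $n$-fold iterate of $\nswaut$ on the generators. Since $\nswaut$ fixes $x_i$ for every $i \in S \setminus \{d(G)\}$ and sends $x_{d(G)}$ to $x_{d(G)} x_{d(G)-1}$, an immediate induction on $n$ — using that $x_{d(G)-1}$ is itself fixed — gives $\nswaut^n(x_{d(G)}) = x_{d(G)}\, x_{d(G)-1}^{\,n}$ in $G$ for every integer $n$ (the identity being valid for negative $n$ as well). Passing to $k_+(G)$ via functoriality, this yields $\nswaut^n_+(y_{d(G)}) = y_{d(G)} + n\, y_{d(G)-1}$, while $\nswaut^n_+(y_i) = y_i$ for every $i \in S \setminus \{d(G)\}$.

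With these formulas in hand, the two conclusions are essentially formal. For the first, if $\nswaut^n_+ = \mathrm{id}$, then the displayed formula forces $n\, y_{d(G)-1} = 0$; since $k_+(G)$ is a $\mathbb{Q}_2$-vector space and $n \neq 0$, this gives $y_{d(G)-1} = 0$, contradicting Lemma~\ref{lemma:1.4}. For the nilpotency assertion, I would evaluate $(\nswaut^n_+ - \mathrm{id})$ on the basis: it annihilates every $y_i$ with $i \neq d(G)$ and sends $y_{d(G)}$ to $n\, y_{d(G)-1}$. Applying $(\nswaut^n_+ - \mathrm{id})$ a second time, the vector $n\, y_{d(G)-1}$ is itself annihilated — here one uses that $d(G)-1 \neq d(G)$, so $y_{d(G)-1}$ lies in the fixed locus — whence $(\nswaut^n_+ - \mathrm{id})^2$ vanishes on every basis vector and is therefore the zero endomorphism of $k_+(G)$.

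The only step that requires genuine care is the transfer from the multiplicative computation in $G$ to the additive one in $k_+(G)$: one must confirm that $\nswaut_+$ is genuinely the map on $k_+(G)$ induced by $\nswaut$ under the identification $x_i \leftrightarrow y_i$, and that the relation $\nswaut^n(x_{d(G)}) = x_{d(G)}\, x_{d(G)-1}^{\,n}$ descends to the linear relation $\nswaut^n_+(y_{d(G)}) = y_{d(G)} + n\, y_{d(G)-1}$ with the integer coefficient $n$ intact after perfection. Once this bookkeeping is set up, no further obstacle remains; the standing hypothesis $d(G) > 1$ (a consequence of $a(G) \geq 2$, via Lemma~\ref{lemma:1.4}) is precisely what guarantees that $y_{d(G)-1}$ is a genuine, nonzero basis vector, and this is what makes both halves of the statement go through simultaneously.
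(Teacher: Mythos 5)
Your proposal is correct and follows essentially the same route as the paper: both arguments rest on the formula $\nswaut^n_+(y_{d(G)}) = y_{d(G)} + n\,y_{d(G)-1}$ (with all other $y_i$ fixed), deduce $\nswaut^n_+ \neq \mathrm{id}$ from Lemma \ref{lemma:1.4}, and verify $(\nswaut^n_+ - \mathrm{id})^2 = 0$ on the generators via Lemma \ref{lemma:1.3}. The only difference is that you spell out the induction and the multiplicative-to-additive transfer that the paper compresses into ``the easily verified equality,'' which is a matter of exposition rather than substance.
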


\begin{corollary}\label{corollary:2.12}
Let $G$ be a group of MLF-type such that $p(G) = 2$, and $a(G) \geq 2$. 
Then the following hold: 
\begin{enumerate}
 \item[\rm (i)] The image of the natural homomorphism from the outer automorphism 
group of $G$ to the automorphism group of $k_+(G)$ is infinite. \label{corollary:2.12:statement:1}
 \item[\rm (ii)] The image of the natural homomorphism from the outer automorphism 
group of $G$ to the automorphism group of $G^{\mathrm{ab}}$ is infinite. \label{corollary:2.12:statement:2}
 \item[\rm (iii)] The image of the natural homomorphism from the outer automorphism 
group of $G$ to the automorphism group of $k^\times(G)$ is infinite. \label{corollary:2.12:statement:3}
 \item[\rm (iv)] The outer automorphism group of $G$ is infinite. \label{corollary:2.12:statement:4}
\end{enumerate}
\end{corollary}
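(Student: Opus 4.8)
The plan is to derive all four assertions from Theorem~\ref{theorem:1.5}, whose essential content is that the automorphism $\nswaut_+$ of $k_+(G)$ induced by $\nswaut$ has infinite order. I would begin with (i). By functoriality of the construction $k_+(-)$, the automorphism $\nswaut^n_+$ induced by $\nswaut^n$ equals the $n$-th power $(\nswaut_+)^n$ in $\Aut(k_+(G))$; Theorem~\ref{theorem:1.5} asserts that $\nswaut^n_+ \neq \mathrm{id}$ for every nonzero integer $n$, so no nonzero power of $\nswaut_+$ is the identity. Hence $\nswaut_+$ generates an infinite cyclic subgroup of $\Aut(k_+(G))$. (Equivalently, setting $N \defeq \nswaut_+ - \mathrm{id}$, Theorem~\ref{theorem:1.5} gives $N \neq 0$ together with $N^2 = 0$, so that $(\nswaut_+)^n = \mathrm{id} + nN$; these endomorphisms are pairwise distinct because the $\mathbb{Q}_2$-vector space $k_+(G)$ is torsion-free.) Since the natural homomorphism $\Out(G) \to \Aut(k_+(G))$ carries the outer class of $\nswaut$ to $\nswaut_+$, its image contains this infinite cyclic subgroup and is therefore infinite, which is (i).

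For (ii) and (iii) I would observe that the automorphism $\phi_+$ of $k_+(G)$ induced by any $\phi \in \Aut(G)$ is already determined by the automorphism $\phi$ induces on the coarser object. Recall from the discussion preceding Proposition~\ref{proposition:1.1} that $\CalO^{\prec}(G)$ is defined as the image of $P(G)$ in $G^{\mathrm{ab}}$ and that $k_+(G) = \CalO^{\prec}(G) \otimes_{\mathbb{Z}} \mathbb{Q}$. Consequently $\phi_+$ is determined by the automorphism of $G^{\mathrm{ab}}$ induced by $\phi$, and likewise---since $\CalO^{\prec}(G)$ is a functorially reconstructed submodule of $k^\times(G)$ whose perfection is $k_+(G)$---by the automorphism of $k^\times(G)$ induced by $\phi$. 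Therefore the infinitely many distinct elements $\nswaut^n_+$ produced in (i) arise from infinitely many distinct induced automorphisms of $G^{\mathrm{ab}}$, respectively of $k^\times(G)$, so the images of $\Out(G)$ in $\Aut(G^{\mathrm{ab}})$ and in $\Aut(k^\times(G))$ are infinite, which gives (ii) and (iii). Finally, (iv) is immediate, since by (i) the group $\Out(G)$ admits an infinite homomorphic image and is therefore itself infinite.

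The argument is almost entirely formal once Theorem~\ref{theorem:1.5} is in hand; the one point I would verify carefully is the ``determined by'' claim underlying (ii) and (iii). Concretely, one must confirm, by tracing the reconstruction algorithms of \cite{Hoshi1}, that each automorphism in the relevant image preserves the submodule $\CalO^{\prec}(G)$ of $G^{\mathrm{ab}}$ (respectively of $k^\times(G)$) and that the resulting action on its perfection $k_+(G)$ is precisely the action $\phi \mapsto \phi_+$ appearing in (i). Granting this functorial compatibility, the remaining content is supplied verbatim by Theorem~\ref{theorem:1.5}.
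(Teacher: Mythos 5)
Your proposal is correct, and for assertions (i), (ii), and (iv) it coincides with the paper's argument: (i) is read off from Theorem~\ref{theorem:1.5} (the paper does not even spell out the infinite cyclic subgroup $\{\mathrm{id}+nN\}_{n\in\bZ}$, which you rightly make explicit), (ii) follows because the action on $k_+(G)$ factors through the action on $G^{\mathrm{ab}}$ by the very definition of $\CalO^{\prec}(G)$ as the image of $P(G)$ in $G^{\mathrm{ab}}$ and of $k_+(G)$ as its perfection, and (iv) is immediate from (i). The one place you diverge is (iii): the paper deduces (iii) from (ii) via the density of $k^\times(G)$ in $G^{\mathrm{ab}}$ --- a continuous automorphism of $G^{\mathrm{ab}}$ is determined by its restriction to a dense subgroup, so infinitely many distinct automorphisms of $G^{\mathrm{ab}}$ restrict to infinitely many distinct automorphisms of $k^\times(G)$ --- whereas you deduce (iii) directly from (i), using that $\CalO^{\prec}(G)$ sits functorially inside $k^\times(G)$ and that its perfection recovers $k_+(G)$, so that the induced automorphism of $k^\times(G)$ determines $\phi_+$. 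Both determination arguments are sound; yours has the mild advantage of not invoking density (or any topological closure argument on $G^{\mathrm{ab}}$), at the cost of the functorial-compatibility check you flag at the end, namely that automorphisms in the image preserve $\CalO^{\prec}(G)\subseteq k^\times(G)$ and act on its perfection by $\phi\mapsto\phi_+$ --- which holds because $P(G)$ is a characteristic (group-theoretically reconstructed) subgroup and the constructions of \cite{Hoshi1}, Definition 3.10, are functorial. The paper's route via (ii) instead needs only that $k^\times(G)$ is a dense reconstructed subgroup of $G^{\mathrm{ab}}$, which is why it cites \cite{Hoshi1}, Definition 3.10, (iv); either way the content beyond Theorem~\ref{theorem:1.5} is purely formal, exactly as you say.
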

\begin{proof}
Assertion (i) follows from Theorem~\ref{theorem:1.5}.  
Assertion (ii) follows from assertion (i), 
together with the definition of $k_+(G)$ 
[cf.\ \cite{Hoshi1}, Definition 3.10, (vi)].  
Assertion (iii) follows from assertion (ii) and 
the [easily verified] density of $k^\times(G)$ in $G^{\mathrm{ab}}$ 
[cf.\ \cite{Hoshi1}, Definition 3.10, (iv)].  
Assertion (iv) follows from assertion (i).  
\end{proof}

\begin{remark}\label{remark:2.13}
Let us recall that it follows immediately from 
\cite{Hoshi2}, Corollary 5.5, that each of the three images 
discussed in Corollary \ref{corollary:2.12}, 
(i), (ii), (iii), in the case where $d(G)$ is
equal to $1$ is trivial.
\end{remark}

\section{Existence of a special automorphism of the absolute Galois group of an absolutely abelian 2-adic local field containing primitive $4$-th roots of unity}
\label{section2}

In the present \S\ref{section2}, we prove that the absolute Galois group of a certain 2-adic local field admits
an automorphism that has an interesting property [cf.\ Theorem \ref{one:theorem:2.8} below]. In  the present \S \ref{section2}, let $k$ be a mixed-characteristic local field and $\overline{k}$ an algebraic closure of $k$. 
We shall write
$G_k \defeq \mathrm{Gal}(\overline{k}/k)$ for the absolute Galois group of $k$ determined by the algebraic closure $\overline{k}$. 
Write, moreover, $\mathrm{Aut}(G_k)$, $\mathrm{Aut}(k_+)$, and $\mathrm{Aut}(k^\times)$ for the groups of automorphisms of the group $G_k$, the module $k_+$, and the module $k^\times$, respectively.  Thus, it follows from \cite{Hoshi1}, Proposition 3.11, (i), (iv), that we have natural homomorphisms 
\dd{
\Aut(G_k) \ar[r] & \Aut(k_+), & \Aut(G_k) \ar[r] & \Aut(k^\times) .  
}

\begin{definition}\label{definition:2.1}    
      \ \ \ 
\begin{enumerate}
  \item[\rm (i)]  We shall say that $\alpha \in \Aut(k_+)$ is $(\bQ_{p_k})_+$-{\it characteristic} if $\alpha(k^{(d=1)}_+) = k^{(d=1)}_+$. \label{definition:2.1:statement1} 
  \item[\rm (ii)] We shall say that $\alpha \in \Aut(k_+)$ is $(\bQ_{p_k})_+$-{\it preserving} if $\alpha$ is $(\bQ_{p_k})_+$-characteristic, and $\left. \alpha \right|_{k^{(d=1)}_+}$ is the identity automorphism of $k_+^{(d=1)}$.  \label{definition:2.1:statement:2} 
  \item[\rm (iii)]  We shall say that $\alpha \in \Aut(k_+)$ is {\it group-theoretic} if $\alpha$ is contained in the image of the first homomorphism $\Aut(G_k) \to \Aut(k_+)$ of the above display. \label{definition:2.1:statement:3} 
\item[\rm (iv)]  We shall say that $\alpha \in \Aut(k^{\times})$ is {\it group-theoretic} if $\alpha$ is contained in the image of the second homomorphism $\Aut(G_k) \to \Aut(k^\times)$ of the above display. \label{definition:2.1:statement:4}    
  \item[\rm (v)]  We shall say that $\alpha \in \Aut(G_k)$ is $(\bQ_{p_k})_+$-{\it characteristic} if the group-theoretic automorphism of $k_+$ induced by $\alpha$ is $(\bQ_{p_k})_+$-characteristic. \label{definition:2.1:statement:5} 
  \item[\rm (vi)] We shall say that $\alpha \in \Aut(G_k)$ is $(\bQ_{p_k})_+$-{\it preserving} if the group-theoretic automorphism of $k_+$ induced by $\alpha$ is $(\bQ_{p_k})_+$-preserving.  \label{definition:2.1:statement:6} 
\end{enumerate}
\end{definition}

\begin{lem}\label{lemma:2.2}
Let $\alpha$ be an automorphism of $G_k$.  Write $\alpha_+ \in \mathrm{Aut}(k_+)$ and $\alpha^{\times} \in \mathrm{Aut}(k^{\times})$ for the respective group-theoretic automorphisms induced by $\alpha$, 
${\rm Nm}_{k/k^{(d=1)}}$
for the norm map with respect to the finite extension $k/k^{(d=1)}$, 
and ${\rm Tr}_{k/k^{(d=1)}}$
for the trace map with respect to the finite extension $k/k^{(d=1)}$. 
Then the following hold:
\begin{enumerate}
  \item[\rm (i)] The automorphism $\alpha^\times$ fits into a commutative diagram of groups
\dd{
k^\times \ar[rr]^-{{\rm Nm}_{k/k^{(d=1)}}} \ar[d]_-{\alpha^\times} && {k^{(d=1)}}^\times \ar@{=}[d] \\
k^\times \ar[rr]_-{{\rm Nm}_{k/k^{(d=1)}}}                         && {k^{(d=1)}}^\times.                
}\label{lemma:2.2:statement:1}
  \item[\rm (ii)] The automorphism $\alpha_{+}$ fits into a commutative diagram of modules
\dd{
k_{+} \ar[rr]^-{{\rm Tr}_{k/k^{(d=1)}}} \ar[d]_-{\alpha_{+}} && k^{(d=1)}_+ \ar@{=}[d] \\
k_{+} \ar[rr]_-{{\rm Tr}_{k/k^{(d=1)}}}                         && k^{(d=1)}_+.                
}
In particular, the automorphism $\alpha_+$ restricts to an  automorphism of \ ${\rm Ker}({\rm Tr}_{k/k^{(d=1)}})$, i.e., the equality $\alpha_+({\rm Ker}({\rm Tr}_{k/k^{(d=1)}})) = {\rm Ker}({\rm Tr}_{k/k^{(d=1)}})$ holds.
\end{enumerate} \label{lemma:2.2:statement:2}
\end{lem}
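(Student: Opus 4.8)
The plan is to deduce both commutative diagrams from two inputs --- the group-theoretic (hence functorial) reconstructibility of the norm and trace maps to the minimal subfield, and the rigidity of the reconstruction of that minimal subfield --- treating (i) as the primary assertion, deriving (ii) from it via the logarithm, and then reading off the statement about $\Ker(\Tr_{k/k^{(d=1)}})$ as a formal consequence.

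First I would record that the minimal subfield $k^{(d=1)}$ is canonically determined by $p_k = p(G_k)$ alone [indeed $k^{(d=1)} \cong \bQ_{p_k}$], so that its multiplicative and additive groups $(k^{(d=1)})^\times$ and $k^{(d=1)}_+$ --- the targets of $\Nm_{k/k^{(d=1)}}$ and $\Tr_{k/k^{(d=1)}}$ --- are reconstructed group-theoretically from $G_k$. Next I would establish that the norm and trace maps to this subfield are themselves functorial in $G_k$; this is the multiplicative, respectively additive, reflection --- under the reciprocity and logarithm identifications of \cite{Hoshi1}, Proposition 3.11, (i), (iv) --- of the canonical maps attached to the extension $k/k^{(d=1)}$. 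Granting this, functoriality of the reconstruction immediately yields, for each $\alpha$, a commutative square as in (i) in which the right-hand vertical arrow is, a priori, merely some automorphism $\beta \in \Aut((k^{(d=1)})^\times)$ induced by $\alpha$ on the reconstructed target, rather than the identity.

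The heart of the matter --- and the step I expect to be the main obstacle --- is to upgrade this to $\beta = \id$. Here I would exploit that $\alpha$ is compatible with the entire reconstructed structure of the minimal subfield, not merely with its multiplicative group: by the $\log$-compatibility of \cite{Hoshi1}, Proposition 3.11, (iv) [cf.\ the commutative diagram recalled preceding Proposition \ref{proposition:1.1}], the automorphism $\beta$ is compatible simultaneously with the multiplicative structure on $(k^{(d=1)})^\times$ and the additive structure on $k^{(d=1)}_+$, hence is induced by a \emph{ring} automorphism of the reconstructed field $k^{(d=1)} \cong \bQ_{p_k}$. Since $\bQ_{p_k}$ admits no nontrivial field automorphism, this forces $\beta = \id$, which gives (i).

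To pass to (ii) I would apply the logarithm to the square of (i): on principal units one has $\log \circ \Nm_{k/k^{(d=1)}} = \Tr_{k/k^{(d=1)}} \circ \log$, so the $\log$-compatible reconstruction transports the norm square --- together with the now-established triviality of its right-hand arrow --- to exactly the trace square of (ii). Finally, the resulting identity $\Tr_{k/k^{(d=1)}} \circ \alpha_+ = \Tr_{k/k^{(d=1)}}$ shows that $\alpha_+$ carries $\Ker(\Tr_{k/k^{(d=1)}})$ into itself; as $\alpha_+$ is a $\bQ_{p_k}$-linear automorphism of the finite-dimensional space $k_+$, it therefore restricts to an automorphism of $\Ker(\Tr_{k/k^{(d=1)}})$, i.e.\ $\alpha_+(\Ker(\Tr_{k/k^{(d=1)}})) = \Ker(\Tr_{k/k^{(d=1)}})$, as desired.
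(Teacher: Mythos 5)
The paper does not reprove this statement: its ``proof'' is a citation of \cite{Hoshi-Nishio}, Lemma 2.3, so your attempt must be measured against the argument of that lemma. Measured so, your central step --- the assertion that $\Nm_{k/k^{(d=1)}}$ and $\Tr_{k/k^{(d=1)}}$ ``are themselves functorial in $G_k$'' --- is precisely the nontrivial content of the lemma, and you never exhibit the group-theoretic construction that would make it so; ``granting this'' begs the question. Under local class field theory the norm map corresponds to the homomorphism $G_k^{\mathrm{ab}} \to G_{k^{(d=1)}}^{\mathrm{ab}}$ induced by the inclusion $G_k \subseteq G_{k^{(d=1)}}$, which is data of the \emph{embedding}, not of the abstract group $G_k$: an automorphism of $G_k$ need not extend to $G_{k^{(d=1)}}$ --- indeed the main theorems of the present paper exploit exactly such non-geometric automorphisms, and for norms to intermediate subfields other than $k^{(d=1)}$ the analogous compatibility is false in general. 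What rescues the statement for the \emph{minimal} subfield is an intrinsic formula that you would need to state and use: the unit part of $\Nm_{k/\bQ_{p_k}}$ is the composite of $\mathrm{rec}_k$ with the cyclotomic character, and the $p$-power part is read off from the unramified quotient together with its group-theoretically determined Frobenius; since every $\alpha \in \Aut(G_k)$ commutes with the cyclotomic character and preserves the inertia subgroup and the Frobenius [cf.\ the reconstructions of \cite{Hoshi1}, \S 3], assertion (i) then follows \emph{with the identity on the target for free}, no separate rigidity step being needed.

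Your rigidity step is moreover invalid as stated. From the compatibility, under $\log$, of $\beta \in \Aut\bigl((k^{(d=1)})^\times\bigr)$ with an additive automorphism of $k^{(d=1)}_+$ you infer that $\beta$ is induced by a ring automorphism of $\bQ_{p_k}$, hence trivial. But a $\log$-compatible multiplicative/additive pair need not arise from a ring automorphism: for any $c \in \Zp^\times$ with $c \neq 1$, the pair consisting of $u \mapsto u^c$ on the pro-$p$ part of $\Zp^\times$ [extended by the identity on the remaining factors of $\bQ_p^\times$] and multiplication by $c$ on $(\bQ_p)_+$ is multiplicatively and additively compatible under $\log$, yet is not the identity and comes from no field automorphism. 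The triviality of the induced action on the minimal subfield is a genuine theorem --- it is the content of \cite{Hoshi2}, Corollary 5.5, invoked in Remark \ref{remark:2.13} for the case $d(G)=1$ --- and its proof again runs through the $\alpha$-invariance of the cyclotomic character, not through the nonexistence of nontrivial field automorphisms of $\bQ_p$. By contrast, your passage from (i) to (ii) via $\log$ on principal units [extended by $\bQ_{p_k}$-linearity to all of $k_+$] and your final formal deduction of $\alpha_+(\Ker(\Tr_{k/k^{(d=1)}})) = \Ker(\Tr_{k/k^{(d=1)}})$ are both correct once (i) is actually in place.
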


\begin{proof}
This lemma is none other than
\cite{Hoshi-Nishio}, Lemma 2.3.
\end{proof}

\begin{lem}\label{lemma:2.5}
Suppose that $p_k = 2$, and that $k$ is isomorphic to the finite Galois extension of $k^{(d=1)}$ obtained by adjoining a primitive $4$-th root of unity.  
Write $\nswaut$ for the automorphism of $G_k$ defined in Definition \ref{alpha}, i.e., 
in the case where we take the ``$G$'' of Definition \ref{alpha} to be $G_k$.  
Then, for every nonzero integer $n$, 
the automorphism $\nswaut^n$ 
is not $(\bQ_{2})_{+}$-characteristic. 
\end{lem}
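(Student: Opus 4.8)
The plan is to argue by contradiction, exploiting that $k_+$ is a $2$-dimensional $\bQ_2$-vector space together with the unipotency supplied by Theorem~\ref{theorem:1.5}. First I would record the numerology: since $p_k = 2$ the minimal subfield $k^{(d=1)}$ is $\bQ_2$, and since $k = k^{(d=1)}(\zeta_4)$ is obtained by adjoining a primitive $4$-th root of unity we have $d_k = d(G_k) = 2$ (indeed $-1$ is not a square in $\bQ_2$). Hence, by Lemma~\ref{lemma:1.2}, $k_+$ is a $\bQ_2$-vector space of dimension $2$, and $k^{(d=1)}_+ = (\bQ_2)_+$ is a $1$-dimensional subspace, i.e.\ a line.

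Next I would analyze the operator $\nswaut^n_+$. By Theorem~\ref{theorem:1.5}, for each nonzero $n$ one has $\nswaut^n_+ \neq \mathrm{id}$ and $(\nswaut^n_+ - \mathrm{id})^2 = 0$ in $\End(k_+)$. Writing $N \defeq \nswaut^n_+ - \mathrm{id}$, this says that $N$ is a nonzero endomorphism of a $2$-dimensional space with $N^2 = 0$, so $N$ has rank exactly $1$ and $\Im N = \Ker N$. A short linear-algebra computation then shows that $\nswaut^n_+$ has \emph{exactly one} invariant line, namely $L \defeq \Ker N$: any line not equal to $\Ker N$ is spanned by a vector $v \notin \Ker N$, for which $Nv$ is a nonzero element of $\Ker N$ and hence cannot lie in $\bQ_2 v$. (Concretely, in the basis $\{y_1, y_2\}$ of Lemma~\ref{lemma:1.3} one has $\nswaut^n_+(y_1) = y_1$ and $\nswaut^n_+(y_2) = y_2 + n y_1$, so $L = \bQ_2 y_1$; but only the uniqueness will be used.)

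I would then produce a \emph{second} invariant line from the trace and derive the contradiction. By Lemma~\ref{lemma:2.2}, (ii), $\nswaut^n_+$ restricts to an automorphism of $\Ker(\Tr_{k/k^{(d=1)}})$; since $\Tr_{k/k^{(d=1)}}$ maps $k_+$ onto the line $k^{(d=1)}_+$ and $k_+$ is $2$-dimensional, its kernel is itself a line, hence an invariant line of $\nswaut^n_+$. By the uniqueness above, $\Ker(\Tr_{k/k^{(d=1)}}) = L$. Now suppose, for contradiction, that $\nswaut^n$ were $(\bQ_2)_+$-characteristic, i.e.\ that $\nswaut^n_+(k^{(d=1)}_+) = k^{(d=1)}_+$. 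Then $k^{(d=1)}_+$ is also an invariant line, forcing $k^{(d=1)}_+ = L = \Ker(\Tr_{k/k^{(d=1)}})$. This is absurd, because the restriction of $\Tr_{k/k^{(d=1)}}$ to $k^{(d=1)}_+$ is multiplication by $[k:k^{(d=1)}] = 2$, which is injective in characteristic $0$; thus $k^{(d=1)}_+ \cap \Ker(\Tr_{k/k^{(d=1)}}) = 0$, whereas a nonzero line cannot meet itself trivially. Hence $\nswaut^n$ is not $(\bQ_2)_+$-characteristic.

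The step I expect to be the main (though mild) obstacle is justifying that the reconstructed map $\Tr_{k/k^{(d=1)}}$ of Lemma~\ref{lemma:2.2}, (ii), really behaves as the field trace, so that its restriction to $k^{(d=1)}_+$ is multiplication by the degree $d_k = 2$; this is precisely what makes the two invariant lines $k^{(d=1)}_+$ and $\Ker(\Tr_{k/k^{(d=1)}})$ distinct and drives the contradiction. Everything else reduces to the $2$-dimensionality forced by $k = \bQ_2(\zeta_4)$ and the rank-one nilpotence furnished by Theorem~\ref{theorem:1.5}.
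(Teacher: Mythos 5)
Your proposal is correct and takes essentially the same approach as the paper: both assume $\nswaut^n_+$ is $(\bQ_{2})_+$-characteristic, combine the non-identity unipotency $(\nswaut^n_+ - \mathrm{id})^2 = 0$ from Theorem~\ref{theorem:1.5} with the invariance of $\Ker(\Tr_{k/k^{(d=1)}})$ from Lemma~\ref{lemma:2.2}, (ii), and force $\nswaut^n_+ = \mathrm{id}$ in the $2$-dimensional space $k_+$, a contradiction. The only difference is presentational: the paper computes $(\nswaut^n_+ - \mathrm{id})^2(x+yi) = x(b-1)^2 + y(c-1)^2 i$ explicitly in the basis $\{1, i\}$ (with $\bQ_2 i = \Ker(\Tr_{k/k^{(d=1)}})$ playing the role of your second invariant line), whereas you repackage the same linear algebra as uniqueness of the invariant line of a rank-one square-zero operator.
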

\begin{proof}
It follows from Theorem \ref{theorem:1.5}; \cite{Hoshi1}, Proposition 3.6; \cite{Hoshi2}, Proposition 2.5 that, 
for every nonzero integer $n$, the group-theoretic automorphism 
$\nswaut_+^n$ of $k_+$ is not the identity automorphism but satisfies the equality  $(\nswaut^n_{+} -{\rm id})^2 = 0$ in $\End(k_+)$. 
Here, let us observe that we can write $k = {k^{(d=1)}}(i)$ for some primitive $4$-th root of unity $i$. 
Assume that $\nswaut^n_+$ is $(\bQ_{2})_{+}$-characteristic for some nonzero integer $n$. 
Thus, $\nswaut^n_+(1) = b$ for some $b \in k^{(d=1)}$.  Moreover, it follows from the final portion of Lemma \ref{lemma:2.2}, (ii), that $\nswaut^n_+(i) = ci$ for some $c \in k^{(d=1)}$.  Thus, since $\nswaut_+$ is an automorphism of $\bQ_{2}$-vector space, it follows that, for arbitrary $x$, $y \in k^{(d=1)}$, the equalities 
\begin{eqnarray}
0 = (\nswaut^n_{+} -{\rm id})^2(x+yi) = x (b - 1)^2 + y (c-1)^2 i  \nonumber
\end{eqnarray}
hold. Thus, we have $(b, c) = (1, 1)$. 
In particular, $\nswaut^n_{+}$ is the identity automorphism. However, this is a contradiction. 
\end{proof}

\begin{lem}\label{old:lemma:2.7}
Suppose that $a_k \geq 2$, and that $k$ is absolutely abelian, i.e., that $k$ is Galois over $k^{(d=1)}$, and, moreover, the Galois group ${\rm Gal}(k/k^{(d=1)})$ is abelian [cf.\ \cite{Hoshi2}, Definition 4.2, (ii)]. 
Write $k'$ for the quadratic extension of $k^{(d=1)}$ obtained by adjoining a primitive 4-th root of unity in $k$ and $G_{k'} \defeq {\rm Gal}(\overline{k}/k')$. 
Then the following hold: 
\begin{enumerate}
    \item[\rm (i)] $G_k$ is a characteristic subgroup of $G_{k'}$. In particular, we have a natural homomorphism $\phi \colon {\rm Aut}(G_{k'}) \rightarrow {\rm Aut}(G_k)$. \label{lemma:2.7:statement:1} 
   \item[\rm (ii)] Let $\alpha'$ be an automorphism of $G_{k'}$ that is not $(\bQ_p)_+$-characteristic. Then $\phi(\alpha') \in {\rm Aut}(G_k)$ [cf.\ (i)] is not $(\bQ_p)_+$-characteristic. \label{lemma:2.7:statement:2}
\end{enumerate}
\end{lem}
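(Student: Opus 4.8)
The plan is to prove the two assertions by leveraging local class field theory --- encoded group-theoretically via the reconstructions recalled in Lemma~\ref{lemma:2.2} --- for (i), and the functoriality of the additive reconstruction together with Lemma~\ref{lemma:2.2}, (ii), for (ii). Throughout I write $F \defeq k^{(d=1)}$, so that $F = \bQ_2$ and $k' = F(i)$ is cyclic of degree two over $F$.

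For (i), I would first note that, since $k$ is absolutely abelian, the subextension $k/k'$ is abelian; hence $G_k$ contains the closure of the commutator subgroup of $G_{k'}$ and corresponds, under the reciprocity isomorphism of local class field theory [cf.\ the reconstruction of \cite{Hoshi1}, Proposition 3.11, (iv)], to the norm subgroup $N \defeq \mathrm{Nm}_{k/k'}(k^\times) \subseteq k'^\times$. The crux is to express the hypothesis ``$k$ is absolutely abelian'' intrinsically in terms of $N$. Writing $\rho$ for the nontrivial element of $\mathrm{Gal}(k'/F)$, the extension $\mathrm{Gal}(k/F)$ of $\bZ/2$ by the abelian group $\mathrm{Gal}(k/k') \cong k'^\times/N$ is abelian exactly when $\rho$ acts trivially on $k'^\times/N$ [this triviality also forces $N$ to be $\rho$-stable, hence $k/F$ Galois]; since $k'/F$ is cyclic, Hilbert 90 identifies $\{\rho(x)x^{-1} : x \in k'^\times\}$ with $\ker(\mathrm{Nm}_{k'/F})$, so the condition amounts to $\ker(\mathrm{Nm}_{k'/F}) \subseteq N$, i.e.\ $N = \mathrm{Nm}_{k'/F}^{-1}(H)$ with $H \defeq \mathrm{Nm}_{k'/F}(N)$. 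Thus $N$ is a union of fibers of $\mathrm{Nm}_{k'/F}\colon k'^\times \to F^\times$. Now let $\alpha' \in \mathrm{Aut}(G_{k'})$ be arbitrary, with induced $\alpha'^\times$. Applying Lemma~\ref{lemma:2.2}, (i), with ``$k$'' taken to be $k'$, I obtain $\mathrm{Nm}_{k'/F} \circ \alpha'^\times = \mathrm{Nm}_{k'/F}$, so $\alpha'^\times$ stabilizes every fiber of $\mathrm{Nm}_{k'/F}$; as $N$ is a union of such fibers, $\alpha'^\times(N) = N$, and translating back through reciprocity gives $\alpha'(G_k) = G_k$. Since $\alpha'$ was arbitrary, $G_k$ is characteristic in $G_{k'}$, yielding the restriction homomorphism $\phi$.

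For (ii) I argue by contraposition: I assume $\phi(\alpha')$ is $(\bQ_2)_+$-characteristic and deduce that $\alpha'$ is as well. The starting observation is that a group-theoretic automorphism preserving $F_+$ must restrict to the identity on $F_+$: indeed $\mathrm{Tr}_{k/F}$ is multiplication by $d_k \neq 0$ on $F_+$, so the commutative diagram of Lemma~\ref{lemma:2.2}, (ii) [applied to $\phi(\alpha') \in \mathrm{Aut}(G_k)$] forces $(\phi(\alpha'))_+$ to fix $F_+ \subseteq k_+$ pointwise. I then invoke the functoriality of the additive reconstruction with respect to the inclusion $G_k \hookrightarrow G_{k'}$: this inclusion induces the trace $\mathrm{Tr}_{k/k'}\colon k_+ \to k'_+$, and the identity $\phi(\alpha') = \alpha'|_{G_k}$ yields the intertwining $\alpha'_+ \circ \mathrm{Tr}_{k/k'} = \mathrm{Tr}_{k/k'} \circ (\phi(\alpha'))_+$. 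Evaluating on $F_+ \subseteq k_+$, where $\mathrm{Tr}_{k/k'}$ is multiplication by the nonzero scalar $[k:k']$ landing in $F_+ \subseteq k'_+$, and using that $(\phi(\alpha'))_+$ fixes $F_+$, I conclude that $\alpha'_+$ fixes $F_+ \subseteq k'_+$ pointwise; in particular $\alpha'$ is $(\bQ_2)_+$-characteristic, which is the desired contrapositive.

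The step I expect to be the main obstacle is the class-field-theoretic bookkeeping in (i): one must verify that the reciprocity identification of $G_k \subseteq G_{k'}$ with $N \subseteq k'^\times$ is compatible with the possibly non-geometric automorphism $\alpha'$ --- so that stability of $N$ under $\alpha'^\times$ genuinely forces $\alpha'(G_k) = G_k$ --- and one must carry out the translation of ``absolutely abelian'' into ``$N$ is a union of norm-fibers'' carefully, since it rests on the cyclicity of $k'/F$ and on Hilbert 90. Once this is secured, Lemma~\ref{lemma:2.2}, (i) and (ii), supply the two crucial inputs, namely stability of the norm-fibers and triviality on $F_+$, essentially for free.
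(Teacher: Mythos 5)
Your proposal is correct, but it reaches both assertions by genuinely different means than the paper. For (i), the paper's proof is much shorter and rests on heavier input: for an arbitrary $\beta \in \Aut(G_{k'})$ it considers the field ${}^{*}k$ corresponding to the open subgroup $\beta(G_k) \subseteq G_{k'}$, notes $[{}^{*}k : k'] = [k : k']$, and invokes the Jarden--Ritter theorem (in the generalized form of Hoshi's Theorem E) to conclude that the absolutely abelian field $k$ is contained in ${}^{*}k$, whence $k = {}^{*}k$ and $\beta(G_k) = G_k$. Your class-field-theoretic replacement --- absolute abelianness forces $\ker(\Nm_{k'/\bQ_2}) \subseteq N \defeq \Nm_{k/k'}(k^{\times})$ via Hilbert 90 (legitimate, since $k'/\bQ_2$ is quadratic; and your ``exactly when'' is harmless, because a central extension with cyclic quotient is automatically abelian), so that $N$ is a union of $\Nm_{k'/\bQ_2}$-fibers, each preserved by $\alpha'^{\times}$ by Lemma~\ref{lemma:2.2}, (i) --- is sound and avoids Jarden--Ritter altogether; the compatibility you flag as the main obstacle is in fact unproblematic, because $k^{\times}(G_{k'})$ is by construction a dense submodule of $G_{k'}^{\mathrm{ab}}$ on which the automorphism induced by $\alpha'$ restricts to $\alpha'^{\times}$, and, $k/k'$ being abelian, $G_k$ is precisely the preimage in $G_{k'}$ of the closure of the image of $N$, so stability of $N$ does yield $\alpha'(G_k) = G_k$. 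For (ii), the paper argues directly rather than by contraposition: the commutative diagram built from the inclusions $k^{(d=1)}_+ \subseteq k'_+ \subseteq k_+$ shows that $\phi(\alpha')_+$ restricts to $\alpha'_+$ on $k'_+$, so $\phi(\alpha')_+(k^{(d=1)}_+) = \alpha'_+(k^{(d=1)}_+) \neq k^{(d=1)}_+$ immediately. Your contrapositive, combining the trace-direction intertwining $\alpha'_+ \circ \Tr_{k/k'} = \Tr_{k/k'} \circ \phi(\alpha')_+$ with the observation that a $(\bQ_2)_+$-characteristic automorphism must fix $(\bQ_2)_+$ pointwise (via Lemma~\ref{lemma:2.2}, (ii), since the trace is multiplication by $d_k$ on that subspace), is also valid and in fact proves the stronger statement that if $\phi(\alpha')$ is $(\bQ_2)_+$-characteristic then $\alpha'$ is $(\bQ_2)_+$-preserving; its only cost is reliance on the norm/trace-direction functoriality of the additive reconstruction, where the paper needs only the restriction-direction compatibility, and it is a step longer than the paper's one-line transport of non-characteristicness along the inclusion $k'_+ \hookrightarrow k_+$.
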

\begin{proof}
First, we verify assertion (i). 
Let $\beta$ be an automorphism of $G_{k'}$.  Write ${}^* k \subseteq \overline{k}$ 
for the finite extension of $k'$ that corresponds to the open subgroup $\beta(G_{k}) \subseteq G_{k'}$.  Thus, it is immediate 
that $[k: k'] = [{}^*k: k']$.  Moreover, 
since $k$ is absolutely abelian, it follows immediately 
from the main theorem of \cite{JrdRtt} 
[cf.\ also \cite{Hoshi2}, Theorem E, which is 
a generalization of the main theorem of \cite{JrdRtt}] 
that $k$ is contained in ${}^* k$.  
Thus, we conclude from the above equality $[k: k'] = [{}^*k: k']$
that $k = {}^*k$, which thus implies that 
$\beta(G_k) = G_k$, as desired. 
This completes the proof of assertion (i).

Next, we verify assertion (ii). Let us first observe that it follows immediately from the various definitions involved that the diagram 
\dd{
k^{(d=1)}_+ \ar@{^{(}->}[r] \ar[d]_-{\left. \phi(\alpha')_+ \right|_{k^{(d=1)}_+}} & k'_+ \ar@{^{(}->}[r] \ar[d]_-{\alpha'_+} & k_+ \ar[d]^-{\phi(\alpha')_+} \\
\alpha'_+(k^{(d=1)}_+) \ar@{^{(}->}[r]                                                 & k'_+ \ar@{^{(}->}[r]                         & k_+                             
}
commutes, where the horizontal arrows are the natural inclusions, and we write $\alpha'_+$ (resp.\ $\phi(\alpha')_+$) for the group-theoretic automorphism induced by $\alpha' \in \Aut(G_{k'})$ (resp.\ $\phi(\alpha') \in \Aut(G_k)$). 
Since $\alpha'_+$ is not $(\bQ_p)_+$-characteristic, $\alpha'_+(k_+^{(d=1)}) \neq k_+^{(d=1)}$. 
Thus, we conclude from the above diagram that $\phi(\alpha')_+$, hence also $\phi(\alpha')$, is not $(\bQ_p)_+$-characteristic. This completes the proof of assertion (ii). 
\end{proof}

\begin{theorem}\label{one:theorem:2.8}
Let $k$ be an absolutely abelian mixed-characteristic local field such that $p_k = 2$, and $a_k \geq 2$. 
Then there exists an automorphism $\alpha \in {\rm Aut}(G_k)$ such that, 
for each nonzero integer $n$, the automorphism $\alpha^n$ 
is not $(\bQ_{2})_+$-characteristic.  
\end{theorem}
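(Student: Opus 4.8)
The plan is to obtain $\alpha$ by transporting the explicit automorphism $\nswaut$ of Definition \ref{alpha}, which lives most naturally on the absolute Galois group of $\bQ_2(i)$, up to $G_k$ through the functorial homomorphism $\phi$ furnished by Lemma \ref{old:lemma:2.7}. In other words, I would reduce the assertion to the two already-established Lemmas \ref{lemma:2.5} and \ref{old:lemma:2.7} and then simply combine them.

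First I would introduce the intermediate field. Since $p_k = 2$ and $a_k \geq 2$, the field $k$ contains a primitive $4$-th root of unity, so we may form the field $k'$, the quadratic extension of $k^{(d=1)}$ obtained by adjoining a primitive $4$-th root of unity lying in $k$, exactly as in Lemma \ref{old:lemma:2.7}. Here $k^{(d=1)} = (k')^{(d=1)} = \bQ_2$ and $k' = \bQ_2(i)$, so $k'$ is precisely the finite Galois extension of $(k')^{(d=1)}$ obtained by adjoining a primitive $4$-th root of unity; in particular $p_{k'} = 2$ and $a_{k'} \geq 2$. Thus $k'$ meets the hypotheses of Lemma \ref{lemma:2.5}, and the automorphism $\nswaut \in \Aut(G_{k'})$ of Definition \ref{alpha} (obtained by taking the ``$G$'' of that definition to be $G_{k'}$) is defined and satisfies: for every nonzero integer $n$, the automorphism $\nswaut^n$ is not $(\bQ_2)_+$-characteristic.

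Next, since $k$ is absolutely abelian and $a_k \geq 2$, Lemma \ref{old:lemma:2.7}(i) provides the homomorphism $\phi \colon \Aut(G_{k'}) \to \Aut(G_k)$ arising from the fact that $G_k$ is a characteristic subgroup of $G_{k'}$. I would then set $\alpha \defeq \phi(\nswaut)$. Because $\phi$ is a group homomorphism, $\alpha^n = \phi(\nswaut)^n = \phi(\nswaut^n)$ for every integer $n$, so a single choice of $\alpha$ governs all of its powers at once. Fixing a nonzero integer $n$, we know from the previous step that $\nswaut^n$ is not $(\bQ_2)_+$-characteristic, whence Lemma \ref{old:lemma:2.7}(ii) (applied with $p = p_k = 2$) shows that $\alpha^n = \phi(\nswaut^n)$ is not $(\bQ_2)_+$-characteristic. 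This produces the required automorphism.

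Since Lemmas \ref{lemma:2.5} and \ref{old:lemma:2.7} already contain all of the substantive content, the present argument is essentially a matter of checking that the hypotheses line up; I do not expect a genuine obstacle. The one point deserving care is the compatibility of the notion of $(\bQ_2)_+$-characteristic across $G_k$ and $G_{k'}$: this notion is defined relative to the minimal subfield, and one must note that $(k')^{(d=1)}$ and $k^{(d=1)}$ coincide (both equal $\bQ_2$), so that ``$(\bQ_2)_+$-characteristic'' means the same thing on either side and Lemma \ref{old:lemma:2.7}(ii) transfers the property correctly. The only other verification, that $k' = \bQ_2(i)$ satisfies the hypotheses of Lemma \ref{lemma:2.5}, is immediate from the construction of $k'$.
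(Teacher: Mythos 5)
Your proposal is correct and takes essentially the same route as the paper's own proof, which likewise forms the quadratic subfield $k'$ obtained by adjoining a primitive $4$-th root of unity to $k^{(d=1)}$, invokes Lemma \ref{lemma:2.5} to obtain an automorphism of $G_{k'}$ none of whose nonzero powers is $(\bQ_2)_+$-characteristic, and transfers it to $G_k$ via Lemma \ref{old:lemma:2.7}, (ii). Your explicit remarks --- that $\phi$ is a group homomorphism, so that $\alpha^n = \phi(\nswaut^n)$, and that $(k')^{(d=1)} = k^{(d=1)}$, so the notion of $(\bQ_2)_+$-characteristic is the same on both sides --- are points the paper leaves implicit, and they are verified correctly.
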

\begin{proof}
Write $k'$ for the quadratic extension of $k^{(d=1)}$ obtained by adjoining a primitive 4-th root of unity in $k$ and $G_{k'} \defeq {\rm Gal}(\overline{k}/k')$. 
Then it follows from Lemma~\ref{lemma:2.5} that
there exists an automorphism $\beta$ of $G_{k'}$ such that, 
for every nonzero integer $n$, the automorphism $\beta^n$ 
is not $(\bQ_{2})_{+}$-characteristic. 
Thus, we conclude from Lemma~\ref{old:lemma:2.7}, (ii), that 
the restriction of $\beta^n$ to $G_k$ is 
not $(\bQ_{2})_+$-characteristic. 
This completes the proof of Theorem~\ref{one:theorem:2.8}. 
\end{proof}

\section{The outer automorphism group of the absolute Galois group of an absolutely abelian 2-adic local field containing primitive $4$-th roots of unity}\label{section3}

In the present \S \ref{section3}, we discuss the outer automorphism
group of the absolute Galois group of a certain 2-adic local field.  In the present
\S \ref{section3}, we maintain the notational conventions introduced
at the beginning of the preceding \S \ref{section2}.  Write, moreover, $\mathrm{Aut}(k)$ for the group of automorphisms of the field $k$ and $\mathrm{Out}(G_k)$ for the group of outer automorphisms of the group $G_k$.  Thus, we have a natural injective [cf.\ \cite{Hoshi1}, Proposition 2.1] homomorphism $\mathrm{Aut}(k) \hookrightarrow \mathrm{Out}(G_k)$ of groups.  In the present \S \ref{section3}, let us regard $\mathrm{Aut}(k)$ as a subgroup of $\mathrm{Out}(G_k)$:  
\dd{
\mathrm{Aut}(k) \subseteq \mathrm{Out}(G_k).  
}

\begin{lem}\label{new:lemma:2.7}
Let $\alpha$ be an automorphism of $G_k$. Suppose that $k$ is absolutely Galois. 
If the image of $\alpha$ in $\mathrm{Out}(G_k)$ is contained in ${\rm N}_{{\rm Out}(G_k)}({\rm Aut}(k))$, then $\alpha$ is $(\bQ_{p_k})_+$-preserving. 
\end{lem}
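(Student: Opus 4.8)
The plan is to push the normalizer hypothesis forward along the natural map to $\Aut(k_+)$ and then to characterize $k^{(d=1)}_+$ group-theoretically, first as the fixed subspace of the natural action of $\Aut(k)$ and then, for the finer ``preserving'' statement, via the trace map. To set this up, note that since every automorphism of $k$ fixes the minimal mixed-characteristic subfield $k^{(d=1)}$ and $k/k^{(d=1)}$ is Galois by hypothesis, we have $\Aut(k) = \Gal(k/k^{(d=1)})$, a finite group whose fixed field on $k = k_+$ is exactly $k^{(d=1)} = k^{(d=1)}_+$. Because inner automorphisms of $G_k$ act trivially on $k_+$ (the functorial quotient used to build $k_+$ factors through $G_k^{\mathrm{ab}}$), the natural homomorphism $\Aut(G_k)\to\Aut(k_+)$ descends to a homomorphism $\rho\colon \Out(G_k)\to\Aut(k_+)$, and by the functoriality of the reconstruction (cf.\ the functorial isomorphism $k_+\cong k_+(G_k)$ recalled at the beginning of \S\ref{section1}, together with \cite{Hoshi1}, Proposition 3.11, (iv)), for each $g\in\Aut(k)$ the automorphism $\rho(g)$ is the natural $\bQ_{p_k}$-linear action of $g$ on $k_+$. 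In particular $\rho$ maps $\Aut(k)$ isomorphically onto a finite subgroup $\Gamma\defeq\rho(\Aut(k))\subseteq\Aut(k_+)$, and $\rho(\bar\alpha)=\alpha_+$, where $\bar\alpha$ denotes the image of $\alpha$ in $\Out(G_k)$ and $\alpha_+$ the induced automorphism of $k_+$.

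Applying $\rho$ to the hypothesis $\bar\alpha\,\Aut(k)\,\bar\alpha^{-1}=\Aut(k)$ then yields $\alpha_+\Gamma\alpha_+^{-1}=\Gamma$, i.e.\ $\alpha_+$ normalizes $\Gamma$ inside $\Aut(k_+)$. The point is that a normalizing element always preserves the fixed subspace of the normalized group: if $v\in(k_+)^{\Gamma}$ and $\gamma\in\Gamma$, then $\gamma(\alpha_+ v)=\alpha_+\bigl((\alpha_+^{-1}\gamma\alpha_+)v\bigr)=\alpha_+ v$, since $\alpha_+^{-1}\gamma\alpha_+\in\Gamma$ fixes $v$. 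By Galois theory $(k_+)^{\Gamma}$ is precisely $k^{(d=1)}_+$ (and this identification is insensitive to whether $\rho(g)$ realizes the action of $g$ or of $g^{-1}$), so $\alpha_+(k^{(d=1)}_+)=k^{(d=1)}_+$; that is, $\alpha$ is $(\bQ_{p_k})_+$-characteristic.

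It then remains to show that $\alpha_+$ acts as the identity on the one-dimensional space $k^{(d=1)}_+=\bQ_{p_k}\cdot 1$. Writing $\alpha_+|_{k^{(d=1)}_+}=\lambda\cdot\id$ for some $\lambda\in\bQ_{p_k}^{\times}$, I would invoke the trace-compatibility of Lemma~\ref{lemma:2.2}, (ii), which gives $\Tr_{k/k^{(d=1)}}(\alpha_+(v))=\Tr_{k/k^{(d=1)}}(v)$ for all $v\in k_+$. Taking $v=1$ and using $\Tr_{k/k^{(d=1)}}(1)=[k:k^{(d=1)}]=d_k$, a nonzero element of the field $k^{(d=1)}=\bQ_{p_k}$, the resulting equality $\lambda\,d_k=d_k$ forces $\lambda=1$. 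Hence $\alpha_+$ restricts to the identity on $k^{(d=1)}_+$, and together with the previous paragraph this shows that $\alpha$ is $(\bQ_{p_k})_+$-preserving, as desired.

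I expect the main obstacle to be the careful justification of the functoriality in the first step: namely, that $\rho$ is genuinely well-defined on $\Out(G_k)$ and that it sends a field automorphism $g\in\Aut(k)$ to its honest additive action on $k_+$, which is what allows the fixed subspace $(k_+)^{\Gamma}$ to be identified with $k^{(d=1)}_+$ through ordinary Galois theory. By contrast, the normalizer-preserves-fixed-subspace observation and the one-line trace computation at the end are routine.
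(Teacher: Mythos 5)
Your proof is correct: descending the normalizer hypothesis along $\mathrm{Out}(G_k)\to\mathrm{Aut}(k_+)$ (legitimate, since inner automorphisms act trivially on $G_k^{\mathrm{ab}}$, hence on $k_+$), identifying the fixed subspace of $\rho(\mathrm{Aut}(k))=\rho(\mathrm{Gal}(k/k^{(d=1)}))$ with $k^{(d=1)}_+$ via the functoriality of the reconstruction isomorphisms, using that a normalizing element preserves this fixed subspace, and then pinning down the scalar $\lambda$ on the one-dimensional space $k^{(d=1)}_+$ by the trace compatibility of Lemma~\ref{lemma:2.2}, (ii), is exactly the expected argument. Note that the paper itself offers no in-line proof --- it simply cites \cite{Hoshi-Nishio}, Lemma 3.3 --- so your write-up is in effect a correct reconstruction of the outsourced proof, following the same route (normalizer $\Rightarrow$ characteristic via fixed subspace, then trace $\Rightarrow$ preserving) rather than a genuinely different one.
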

\begin{proof}
This lemma is none other than 
\cite{Hoshi-Nishio}, Lemma 3.3. 
\end{proof}

\begin{theorem}\label{two:theorem:2.8}
Let $k$ be an absolutely abelian mixed-characteristic local field such that $p_k = 2$, and $a_k \geq 2$. Then the set of ${\rm Out}(G_k)$-conjugates of the subgroup ${\rm Aut}(k) \subseteq {\rm Out}(G_k)$ is infinite. 
\end{theorem}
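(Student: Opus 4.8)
The plan is to exhibit infinitely many distinct left cosets of the normalizer $N \defeq \mathrm{N}_{\mathrm{Out}(G_k)}(\mathrm{Aut}(k))$ in $\mathrm{Out}(G_k)$. Indeed, by the orbit--stabilizer correspondence for the action of $\mathrm{Out}(G_k)$ on the set of its subgroups by conjugation, the stabilizer of $\mathrm{Aut}(k)$ is precisely $N$, so the set of $\mathrm{Out}(G_k)$-conjugates of $\mathrm{Aut}(k)$ is in natural bijection with the coset space $\mathrm{Out}(G_k)/N$. Thus it suffices to prove that the index $[\mathrm{Out}(G_k):N]$ is infinite.

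First, I would note that, since $k$ is absolutely abelian, it is in particular absolutely Galois, so that both Theorem~\ref{one:theorem:2.8} and Lemma~\ref{new:lemma:2.7} apply to $k$. Invoking Theorem~\ref{one:theorem:2.8}, I would fix an automorphism $\alpha \in \mathrm{Aut}(G_k)$ such that, for every nonzero integer $n$, the automorphism $\alpha^n$ is \emph{not} $(\bQ_2)_+$-characteristic, and I would write $\bar\alpha \in \mathrm{Out}(G_k)$ for the image of $\alpha$ under the natural surjection $\mathrm{Aut}(G_k) \to \mathrm{Out}(G_k)$; since this surjection is a group homomorphism, the image of $\alpha^n$ is precisely $\bar\alpha^n$.

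Next comes the key logical chain. By Definition~\ref{definition:2.1}, (ii), (v), (vi), every $(\bQ_2)_+$-preserving automorphism of $G_k$ is in particular $(\bQ_2)_+$-characteristic; hence the failure of $\alpha^n$ to be $(\bQ_2)_+$-characteristic forces $\alpha^n$ to fail to be $(\bQ_2)_+$-preserving, for every nonzero integer $n$. Applying the contrapositive of Lemma~\ref{new:lemma:2.7} to the automorphism $\alpha^n$, I would then conclude that $\bar\alpha^n \notin N$ for every nonzero integer $n$.

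Finally, I would deduce that the cosets $\bar\alpha^n N$, as $n$ ranges over $\bZ$, are pairwise distinct: were $\bar\alpha^m N = \bar\alpha^n N$ to hold for some pair of integers $m \neq n$, then $\bar\alpha^{m-n} \in N$ with $m-n$ a nonzero integer, contradicting the previous step. Hence $[\mathrm{Out}(G_k):N]$ is infinite, and the conclusion follows. The substantive work is entirely absorbed into Theorem~\ref{one:theorem:2.8} and Lemma~\ref{new:lemma:2.7}; the only points demanding care are the elementary implication ``not $(\bQ_2)_+$-characteristic $\Rightarrow$ not $(\bQ_2)_+$-preserving $\Rightarrow$ outside $N$'' and the translation of ``the nonzero powers of $\bar\alpha$ avoid $N$'' into an infinite index. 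I therefore do not anticipate any genuine obstacle beyond correctly marshaling these prior results.
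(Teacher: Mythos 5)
Your proposal is correct and follows essentially the same route as the paper: reduce to showing ${\rm N}_{{\rm Out}(G_k)}({\rm Aut}(k))$ has infinite index, take $\alpha$ from Theorem~\ref{one:theorem:2.8}, and apply the contrapositive of Lemma~\ref{new:lemma:2.7} to each $\alpha^n$. You merely spell out a few details the paper leaves implicit --- the orbit--stabilizer bijection, the implication ``$(\bQ_2)_+$-preserving $\Rightarrow$ $(\bQ_2)_+$-characteristic,'' and the pairwise-distinctness of the cosets $\bar\alpha^n N$ --- all of which are accurate.
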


\begin{proof}
It is immediate that, to verify Theorem~\ref{two:theorem:2.8}, 
it suffices to show that ${\rm N}_{{\rm Out}(G_k)}({\rm Aut}(k))$ 
is of infinite index in ${\rm Out}(G_k)$.  
On the other hand, if $\alpha \in \Aut(G_k)$ is an automorphism 
as in Theorem ~\ref{one:theorem:2.8}, then it follows from 
Lemma \ref{new:lemma:2.7} that,
for each nonzero integer $n$, the image of $\alpha^n$ 
in ${\rm Out}(G_k)$ is not contained in 
${\rm N}_{{\rm Out}(G_k)}({\rm Aut}(k))$.  
In particular, we conclude that 
${\rm N}_{{\rm Out}(G_k)}({\rm Aut}(k))$ 
is of infinite index in ${\rm Out}(G_k)$, as desired.  
This completes the proof of Theorem~\ref{two:theorem:2.8}.  
\end{proof}

\begin{corollary}\label{corollary:2.9}
Let $k$ be an absolutely abelian mixed-characteristic local field such that $p_k = 2$, and $a_k \geq 2$. Then the following hold: 
  \begin{enumerate}
  	\item[\rm (i)] The subgroup ${\rm Aut}(k)$ of ${\rm Out}(G_k)$ is not normal. \label{colloraly:2.9:statement:1}
  	\item[\rm (ii)] There exist infinitely many distinct 
	subgroups of ${\rm Out}(G_k)$ isomorphic to ${\rm Aut}(k)$.  \label{corollary:2.9:statement:2}   
  \end{enumerate}
\end{corollary}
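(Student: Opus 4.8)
The plan is to derive both assertions directly from Theorem~\ref{two:theorem:2.8}, which asserts that the set of $\mathrm{Out}(G_k)$-conjugates of $\mathrm{Aut}(k)$ is infinite. The entire substance of the Corollary has already been extracted in that Theorem; what remains is the formal task of translating the statement ``infinitely many conjugates'' into the two consequences recorded in (i) and (ii).

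First I would verify assertion (i) by contraposition. If $\mathrm{Aut}(k)$ were a normal subgroup of $\mathrm{Out}(G_k)$, then $g\,\mathrm{Aut}(k)\,g^{-1} = \mathrm{Aut}(k)$ for every $g \in \mathrm{Out}(G_k)$, so the set of $\mathrm{Out}(G_k)$-conjugates of $\mathrm{Aut}(k)$ would consist of the single subgroup $\mathrm{Aut}(k)$ and hence be finite. This contradicts Theorem~\ref{two:theorem:2.8}, and so $\mathrm{Aut}(k)$ is not normal in $\mathrm{Out}(G_k)$.

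Next I would obtain assertion (ii) by observing that, for each $g \in \mathrm{Out}(G_k)$, the conjugation map $h \mapsto g h g^{-1}$ restricts to an isomorphism $\mathrm{Aut}(k) \stackrel{\sim}{\to} g\,\mathrm{Aut}(k)\,g^{-1}$; in particular, every $\mathrm{Out}(G_k)$-conjugate of $\mathrm{Aut}(k)$ is a subgroup of $\mathrm{Out}(G_k)$ that is abstractly isomorphic to $\mathrm{Aut}(k)$. Since Theorem~\ref{two:theorem:2.8} guarantees infinitely many \emph{distinct} such conjugates, this furnishes infinitely many distinct subgroups of $\mathrm{Out}(G_k)$ isomorphic to $\mathrm{Aut}(k)$, as required.

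As for where the real difficulty lies: there is no genuine obstacle at the level of this Corollary, since both parts are purely formal group-theoretic consequences of the infinitude of the conjugacy class. The only point deserving a moment's care is that in (ii) one must invoke the infinitude of the set of distinct conjugates \emph{as subgroups}, rather than merely the infinitude of the ambient group $\mathrm{Out}(G_k)$ --- and it is precisely this finer statement that Theorem~\ref{two:theorem:2.8} supplies. All the essential work, namely the production of an automorphism none of whose nonzero powers is $(\bQ_2)_+$-characteristic together with the use of Lemma~\ref{new:lemma:2.7} to force ${\rm N}_{{\rm Out}(G_k)}({\rm Aut}(k))$ to have infinite index, has already been carried out in the preceding arguments.
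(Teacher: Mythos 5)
Your proposal is correct and matches the paper exactly: the paper's proof of this Corollary is simply the one-line remark that both assertions follow immediately from Theorem~\ref{two:theorem:2.8}, and your write-up supplies precisely the routine formal steps (normality would force a single conjugate, and conjugation yields isomorphic subgroups) that the paper leaves implicit.
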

\begin{proof}
These assertions follow immediately from Theorem \ref{two:theorem:2.8}. 
\end{proof}

\begin{corollary}\label{Corollary 3.8} 
Let $k$ be the quadratic mixed-characteristic local field obtained by adjoining a primitive 4-th root of unity to $k^{(d=1)}$ such that $p_k=2$. 
Then the group-theoretic automorphism of $k_+$ induced by an 
automorphism of $G_k$ which lifts an element of the center of 
$\mathrm{Out}(G_k)$ is the identity automorphism of $k_+$.  
\end{corollary}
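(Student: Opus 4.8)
The plan is to combine the ``$(\bQ_{2})_+$-preserving'' rigidity forced by centrality with the unipotent automorphism $\nswaut$ of Definition~\ref{alpha}. Since $p_k = 2$, the minimal subfield is $k^{(d=1)} = \bQ_{2}$, so $k = \bQ_{2}(i)$ for a primitive $4$-th root of unity $i$; thus $d_k = 2$, and by Lemma~\ref{lemma:1.2} (applied to $G = G_k$) the module $k_+$ is a $2$-dimensional $\bQ_{2}$-vector space, which we may identify with the additive group of $k$ equipped with the $\bQ_{2}$-basis $\{1, i\}$. A preliminary observation is that the natural homomorphism $\Aut(G_k) \to \Aut(k_+)$ factors through $\mathrm{Out}(G_k)$: indeed $k_+$ is built from $\CalO^{\prec}(G_k) \otimes_{\bZ} \bQ$, where $\CalO^{\prec}(G_k)$ is the image of $P(G_k)$ in $G_k^{\rm ab}$, and inner automorphisms induce the identity on $G_k^{\rm ab}$. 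Hence one obtains a homomorphism $\mathrm{Out}(G_k) \to \Aut(k_+)$, so that commuting elements of $\mathrm{Out}(G_k)$ have commuting images in $\Aut(k_+)$.

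Next I would extract the shape of $\alpha_+$. Let $\alpha \in \Aut(G_k)$ lift an element of the center of $\mathrm{Out}(G_k)$. A central element normalizes every subgroup, in particular $\Aut(k) \subseteq \mathrm{Out}(G_k)$; since $k = \bQ_{2}(i)$ is absolutely Galois, Lemma~\ref{new:lemma:2.7} applies and shows that $\alpha$ is $(\bQ_{2})_+$-preserving, so that $\alpha_+$ fixes the line $k^{(d=1)}_+ = \bQ_{2}\cdot 1$ pointwise; in particular $\alpha_+(1) = 1$. On the other hand, $\mathrm{Tr}_{k/\bQ_{2}}(a + bi) = 2a$, so $\mathrm{Ker}(\mathrm{Tr}_{k/\bQ_{2}}) = \bQ_{2}\cdot i$, and by the final assertion of Lemma~\ref{lemma:2.2}, (ii), $\alpha_+$ stabilizes this line. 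Hence $\alpha_+(i) = c\,i$ for some $c \in \bQ_{2}^{\times}$, and in the basis $\{1, i\}$ the operator $\alpha_+$ is the diagonal operator $\mathrm{diag}(1, c)$.

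Finally I would rigidify using $\nswaut$. By Theorem~\ref{theorem:1.5} (with $n = 1$) the induced operator $\nswaut_+ \in \Aut(k_+)$ satisfies $\nswaut_+ \neq \mathrm{id}$ and $(\nswaut_+ - \mathrm{id})^2 = 0$; thus $\nswaut_+$ is a nontrivial unipotent operator, hence not semisimple. Since $\alpha$ lifts a central element, the preliminary observation shows that $\alpha_+$ commutes with $\nswaut_+$. If $c \neq 1$, then $\alpha_+ = \mathrm{diag}(1, c)$ has two distinct eigenvalues, so every operator commuting with it must preserve both eigenlines $\bQ_{2}\cdot 1$ and $\bQ_{2}\cdot i$, i.e.\ must be diagonal in the basis $\{1, i\}$. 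Applying this to the unipotent $\nswaut_+$ forces $\nswaut_+$ to be a diagonal unipotent, hence $\nswaut_+ = \mathrm{id}$, contradicting Theorem~\ref{theorem:1.5}. Therefore $c = 1$ and $\alpha_+ = \mathrm{id}$, as desired.

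The main obstacle is precisely this last step: Lemma~\ref{new:lemma:2.7} and Lemma~\ref{lemma:2.2} only pin $\alpha_+$ down to the diagonal torus $\mathrm{diag}(1, c)$, and to eliminate the scalar freedom one must exhibit a single non-semisimple element in the image of $\mathrm{Out}(G_k) \to \Aut(k_+)$ with which $\alpha_+$ is forced to commute. The automorphism $\nswaut$ supplies exactly such an element, and the clash between ``regular-semisimple centralizer'' and ``unipotent'' closes the argument; the one technical point to verify beforehand is the factoring of $\Aut(G_k) \to \Aut(k_+)$ through $\mathrm{Out}(G_k)$, which guarantees that $\nswaut_+$ indeed lies in the relevant image.
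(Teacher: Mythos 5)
Your proof is correct, and it shares the paper's skeleton up to the midpoint: centrality places the class of your $\alpha$ in ${\rm N}_{{\rm Out}(G_k)}({\rm Aut}(k))$, so Lemma~\ref{new:lemma:2.7} makes $\alpha_+$ $(\bQ_{2})_+$-preserving, and the conclusion is then extracted by commuting $\alpha_+$ against another operator in the image of ${\rm Out}(G_k)$. But your endgame is genuinely different from the paper's. The paper takes a group-theoretic $\beta_+$ that is \emph{not} $(\bQ_{2})_+$-characteristic [Theorem~\ref{one:theorem:2.8}]; since the central operator is the identity on $k_+^{(d=1)}$ and commutes with $\beta_+$, it is also the identity pointwise on the distinct line $\beta_+(k_+^{(d=1)})$, and these two lines span the two-dimensional space $k_+$, so the central operator is the identity --- no basis, no trace map, no eigenvalue analysis, and no use of Lemma~\ref{lemma:2.2}, (ii). You instead first rigidify the central operator to $\mathrm{diag}(1,c)$ in the basis $\{1,i\}$ via the trace-kernel stability of Lemma~\ref{lemma:2.2}, (ii) (a step the paper never needs), and then run a regular-semisimple-versus-unipotent centralizer clash directly against $\nswaut_+$, whose nontrivial unipotency you take from Theorem~\ref{theorem:1.5}; strictly speaking, Theorem~\ref{theorem:1.5} is a statement about $k_+(G_k)$, and the transfer to $k_+$ via the functorial isomorphism is exactly the opening step of the proof of Lemma~\ref{lemma:2.5}, which you should cite there. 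Note that for the quadratic $k$ of this corollary the non-characteristic automorphism furnished by Theorem~\ref{one:theorem:2.8} is just $\nswaut$ on $G_k$ itself, so in substance both proofs commute the central element against the same automorphism but exploit different properties of it: the paper uses that $\nswaut_+$ moves the line $k_+^{(d=1)}$, while you use that it is unipotent and nontrivial. Your route buys independence from the $(\bQ_{2})_+$-characteristic formalism (Definition~\ref{definition:2.1}, Lemma~\ref{lemma:2.5}, Theorem~\ref{one:theorem:2.8}) at the cost of slightly heavier linear algebra; conversely, your explicit verification that $\Aut(G_k) \to \Aut(k_+)$ factors through ${\rm Out}(G_k)$ makes rigorous a point the paper handles only by its bracketed remark that the induced automorphism is independent of the choice of lifting, which is a genuine gain in precision.
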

\begin{proof}
Let $\gamma$ be an element of the center of $\mathrm{Out}(G_k)$.  
Write $\gamma_+ \in \mathrm{Aut}(k_+)$ for the group-theoretic 
automorphism of $k_+$ induced by an automorphism of $G_k$ which 
lifts $\gamma$.  [Note that one verifies easily that $\gamma_+$ 
does not depend on the choice of such a lifting.]  Then it follows from Lemma \ref{new:lemma:2.7} that $\gamma_+$ is $(\mathbb{Q}_{2})_+$-preserving. 
Next, let $\alpha_+ \in \mathrm{Aut}(k_+)$ be a group-theoretic 
automorphism of $k_+$ which is not $(\mathbb{Q}_{2})_+$-characteristic 
[cf.\ Theorem \ref{one:theorem:2.8}]. Then since $\gamma$ is an element 
of the center of $\mathrm{Out}(G_k)$, one verifies immediately that 
$\gamma_+$ commutes with $\alpha_+$.  In particular, since $\gamma_+$ 
is $(\mathbb{Q}_{2})_+$-preserving, $\gamma_+$ restricts to the identity automorphism of $\alpha_+(k_+^{(d=1)}) \subseteq k_+$. 
Thus, since $d_k = 2$, and $k_+^{(d=1)} \neq \alpha_+(k_+^{(d=1)})$, 
we conclude that $\gamma_+$ is the identity automorphism of $k_+$, 
as desired.  This completes the proof of Corollary \ref{Corollary 3.8}. 
\end{proof}

\section{Existence of a special irreducible crystalline representations of dimension two}
In the present \S4, let $k$ be a mixed-characteristic local field and $\bar{k}$ an algebraic closure of $k$. We shall write $G_k \defeq \Gal(\bar{k}/k)$ for the absolute Galois group of $k$ determined by the algebraic closure $\bar{k}$ and ${\rm rec}_k \colon \left(k^{\times}\right)^{\wedge} \xrightarrow{\sim} G^{\rm ab}_k$ for the isomorphism induced by the reciprocity homomorphism $k^{\times} \xrightarrow[]{} \ G^{\rm ab}_k$ in local class field theory. 
Let us recall that, for a given $\bQ_{p_k}$-vector space $V$ of finite dimension and a given continuous representation $\rho: G_k \xrightarrow[]{} \Aut_{\bQ_{p_k}}(V)$ of $G_k$, we say that $\rho$ is \textit{Aut-intrinsically Hodge-Tate} if, for an arbitrary automorphism $\alpha$ of $G_k$, the composite $\rho \circ \alpha :G_k \xrightarrow{\sim} G_k \xrightarrow[]{} \Aut_{\bQ_{p_k}}(V)$
is Hodge-Tate [cf. \cite{Hoshi_intrinsic_Hodge-Tate}, Definition 1.3].

In the present \S4, we prove the existence of an irreducible crystalline [hence also Hodge\textrm{-}Tate] 2-adic representation of dimension 2 that is not \ Aut\textrm{-}intrinsically \ Hodge\textrm{-}Tate [cf. Corollary \ref{cor:4.4} below].

First, let us recall the functorial assignment ``$\mathcal{O}^{\times}(-)$'' of \cite{Hoshi1}, Definition~3.10, (i). Observe that it follows from the functoriality of the assignment ``$\mathcal{O}^{\times}(-)$'' that each automorphism of $G_k$ naturally induces a automorphism of $\mathcal{O}^{\times}(G_k)$. 
In particular, by conjugating this automorphism of $\mathcal{O}^{\times}(G_k)$ by the isomorphism $\mathcal{O}_k^{\times} \xrightarrow{\sim} \mathcal{O}^{\times}(G_k)$ of \cite{Hoshi1}, Proposition~3.11, (i), one concludes that each automorphism of $G_k$ naturally induces a automorphism of $\mathcal{O}_k^{\times}$.
\begin{definition}
    For an automorphism $\alpha$ of $G_k$, we shall write $\alpha_{\times}$ for the automorphism of $\mathcal{O}_k^{\times}$ induced by $\alpha$. 
\end{definition}

\begin{lem}\label{lem:3.1}
Suppose that $k$ is the quadratic mixed-characteristic local field obtained by adjoining a primitive 4-th root of unity to $k^{(d=1)}$, and that $p_k = 2$. 
Write ${\rm Nm} \colon k^\times\;\xrightarrow[]{}\;(k^{(d=1)})^{\times}$
for the norm map with respect to the finite extension $k/k^{(d=1)}$. 
Then the following hold: 
\begin{enumerate}[\rm (i)]
  \item There exists an open submodule $U \subseteq \mathcal{O}^{\times}_k$ such that 
  \begin{enumerate}[\rm (1)]
    \item the topological module $U$ has a natural structure of 
    free $\bZ_2$-module of rank $2$, and, moreover,  
    \item the submodule $U \subseteq \mathcal{O}^{\times}_k$ is preserved by an arbitary automorphism of $\mathcal{O}^{\times}_k$.
  \end{enumerate}
  \item Let $U \subseteq \mathcal{O}^{\times}_k$ be as in (i). Then the topological modules $U \cap \mathcal{O}^{\times}_{k^{(d=1)}}$, $U \cap {\rm Ker}(\Nm)$ have natural structures of free $\bZ_2$-modules of rank $1$, respectively.   
  \item Let $U \subseteq \mathcal{O}^{\times}_k$ be as in (i). Then the equality $U \cap  \mathcal{O}^{\times}_{k^{(d=1)}}  \cap  {\rm Ker}(\Nm)= \{1\}$ holds.  
  \item Let $U \subseteq \mathcal{O}^{\times}_k$ be as in (i). Then the closed submodule of $U$ topologically generated by the closed submodules $U \cap \mathcal{O}^{\times}_{k^{(d=1)}}$ and $U \cap {\rm Ker}(\Nm)$ is open. 
  \item There exists an automorphism $\alpha$ of $G_k$ such that, for an arbitrary nonzero integer $n$, the intersection $\alpha^n_{\times}(\mathcal{O}^{\times}_{k^{(d=1)}}) \cap \mathcal{O}^{\times}_{k^{(d=1)}}$ is not open in $\mathcal{O}^{\times}_{k^{(d=1)}}$. 
  In particular, the automorphism $\alpha^n_{\times}$ does not preserve the submodule $\mathcal{O}^{\times}_{k^{(d=1)}} \subseteq \mathcal{O}^{\times}_k$.  
\end{enumerate}
\end{lem}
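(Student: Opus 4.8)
The plan is to reduce (i)--(iv) to the structure of the unit groups involved and to reserve the real work for (v), which rests on the functoriality of the group-theoretic logarithm. First I would record the relevant structure. Since $p_k = 2$ and $k = k^{(d=1)}(i)$ is the ramified quadratic extension obtained by adjoining a primitive $4$-th root of unity $i$, the residue field of $k$ is $\bF_2$; hence $\CalO_k^\times = 1 + \mathfrak{m}_k = \CalO_k^{\prec}$ and, as a topological $\bZ_2$-module, $\CalO_k^\times \cong \mu_4 \times \bZ_2^2$ with torsion subgroup $\mu_4 = \langle i \rangle$. I would also record two norm computations used throughout: for $x \in k^{(d=1)} = \bQ_2$ one has $\Nm(x) = x^2$, so $\CalO_{k^{(d=1)}}^\times \cap \Ker(\Nm) = \{\pm 1\}$; and $\Nm(i) = i \cdot (-i) = 1$, so the entire torsion subgroup $\mu_4$ lies in $\Ker(\Nm)$. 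For (i) I would then take $U \defeq (\CalO_k^\times)^4$, the subgroup of $4$-th powers. It is compact (a continuous image of $\CalO_k^\times$), hence a closed submodule; it is open of finite index; it is visibly preserved by every automorphism of $\CalO_k^\times$ (which carries $4$-th powers to $4$-th powers); and since raising to the $4$-th power annihilates $\mu_4$ while acting injectively on the free part, $U$ is torsion-free of $\bZ_2$-rank $2$, i.e.\ free of rank $2$.

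For (ii)--(iv) the operative facts are that $U$ is torsion-free and open in $\CalO_k^\times$. Thus $U \cap \CalO_{k^{(d=1)}}^\times$ is a torsion-free open subgroup of $\CalO_{k^{(d=1)}}^\times = \bZ_2^\times \cong \bZ/2 \times \bZ_2$, hence free of rank $1$; likewise, a rank count (the image of $\Nm$ on $\CalO_k^\times$ being open in $\bZ_2^\times$) together with $\mu_4 \subseteq \Ker(\Nm)$ gives $\Ker(\Nm) \cap \CalO_k^\times \cong \mu_4 \times \bZ_2$, so that $U \cap \Ker(\Nm)$ is a torsion-free open subgroup of it, hence free of rank $1$; this is (ii). For (iii), $\CalO_{k^{(d=1)}}^\times \cap \Ker(\Nm) = \{\pm 1\}$ is torsion, so its intersection with the torsion-free module $U$ is trivial. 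For (iv), (iii) shows that the two rank-one submodules $A \defeq U \cap \CalO_{k^{(d=1)}}^\times$ and $B \defeq U \cap \Ker(\Nm)$ meet trivially, so they span distinct lines in $U \otimes_{\bZ_2} \bQ_2 \cong \bQ_2^2$; since $A$ and $B$ are compact, the submodule $A + B$ they topologically generate is closed, and being of full rank $2$ it is open in $U$.

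The heart of the lemma is (v). I would take $\alpha \defeq \nswaut$, the automorphism of $G_k$ of Definition \ref{alpha} (available since $p(G_k) = 2$ and $a(G_k) = a_k = 2 \geq 2$). The essential input is the functorial compatibility recorded in the commutative diagram preceding Proposition \ref{proposition:1.1}: the logarithm $\log_k \colon \CalO_k^{\prec} \to k_+$ intertwines the induced automorphism $\alpha_\times$ of $\CalO_k^\times = \CalO_k^{\prec}$ with the induced automorphism $\alpha_+$ of $k_+$, so that $\log_k \circ \alpha_\times^n = \alpha_+^n \circ \log_k$ for every $n$. Writing $W \defeq \CalO_{k^{(d=1)}}^\times$ and $L \defeq k^{(d=1)}_+ \subseteq k_+$, one has $\log_k(W) = 4\bZ_2$ spanning $L$ over $\bQ_2$. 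Now suppose, for a contradiction, that $\alpha_\times^n(W) \cap W$ were open in $W$ for some nonzero $n$. Then $\log_k(\alpha_\times^n(W) \cap W)$ is of finite index in $\log_k(W)$, hence still spans $L$; on the other hand it is contained in $\log_k(\alpha_\times^n(W)) = \alpha_+^n(\log_k(W))$, which spans $\alpha_+^n(L)$. A comparison of dimensions then forces $\alpha_+^n(L) = L$, i.e.\ $\alpha^n$ would be $(\bQ_2)_+$-characteristic, contradicting Lemma \ref{lemma:2.5}. Hence $\alpha_\times^n(W) \cap W$ is not open, and in particular $\alpha_\times^n(W) \neq W$, yielding (v).

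The main obstacle is precisely (v): beyond the elementary openness-and-spanning bookkeeping, it relies entirely on the functoriality of the group-theoretic logarithm---so that $\alpha_\times$ and $\alpha_+$ are genuinely intertwined---together with the non-$(\bQ_2)_+$-characteristic conclusion of Lemma \ref{lemma:2.5}. The two points demanding care are the identification $\CalO_k^\times = \CalO_k^{\prec}$, which is what makes $\alpha_\times$ governed by $\log_k$ on all of $\CalO_k^\times$, and the passage from ``preserves a lattice up to finite index'' to ``preserves the line $L$''.
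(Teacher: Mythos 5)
Your argument is correct, but its key part, (v), takes a genuinely different route from the paper's. For (i)--(iv) the content is essentially the same: you construct $U = (\CalO_k^\times)^4$ explicitly where the paper simply cites \cite{Hoshi1}, Lemma 1.2, (i), and your torsion/rank bookkeeping for (ii)--(iv) matches the paper's. The divergence is in (v), where the paper never invokes Lemma \ref{lemma:2.5}: it restricts $\alpha_\times^n$ to $U$, imports from Theorem \ref{theorem:1.5} the multiplicative unipotency of the restriction $\beta$ (namely $\beta \neq \mathrm{id}$ but the square of $a \mapsto \beta(a)\cdot a^{-1}$ is trivial), imports from Lemma \ref{lemma:2.2}, (i), that $\beta$ preserves $U \cap \Ker(\Nm)$, and then argues inside the rank-$2$ lattice $U$: a unipotent $\beta$ preserving both the line through $U \cap \Ker(\Nm)$ and some open submodule of $U \cap \CalO^{\prec}_{k^{(d=1)}}\,$ would, by (iv), be the identity on $U$ --- contradiction; hence $\beta(U \cap \CalO^{\times}_{k^{(d=1)}}) \cap U \cap \CalO^{\times}_{k^{(d=1)}} = \{1\}$, from which non-openness follows via \cite{Hoshi1}, Lemma 1.2, (i). You instead transfer everything through $\log_k$ --- legitimate here, as you note, because the residue field is $\bF_2$, so $\CalO_k^\times = \CalO_k^{\prec}$, and the functorial square preceding Proposition \ref{proposition:1.1} intertwines $\alpha_\times$ with $\alpha_+$ --- and derive the contradiction from Lemma \ref{lemma:2.5}: openness of $\alpha_\times^n(\CalO^\times_{k^{(d=1)}}) \cap \CalO^\times_{k^{(d=1)}}$ would force $\alpha_+^n(k^{(d=1)}_+) = k^{(d=1)}_+$, i.e., $\nswaut^n$ would be $(\bQ_2)_+$-characteristic. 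Since Lemma \ref{lemma:2.5} is itself proved from Theorem \ref{theorem:1.5} via the trace compatibility of Lemma \ref{lemma:2.2}, (ii), you have in effect swapped the paper's norm-based lattice argument for the trace-based line argument already packaged in \S 2. What each buys: your version is shorter, reuses Lemma \ref{lemma:2.5} as a black box, and makes (v) logically independent of (i)--(iv), whereas the paper routes (v) through the lattice $U$; the paper's version stays entirely multiplicative --- no convergence or injectivity facts about $\log$, no computation such as $\log_k(\bZ_2^\times) = 4\bZ_2$ --- and yields the stronger intermediate fact that the relevant intersection inside $U$ is literally trivial, not merely non-open.
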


\begin{proof}
Assertions (i), (ii) follow from \cite{Hoshi1}, Lemma 1.2, (i) [cf.\ also our assumption that $d_k = p_k =2$.] 
Assertion (iii) is immediate [cf.\ the fact that $U$ is torsion-free, which follows from the condition (1) of assertion (i)]. Assertion (iv) follows from assertions (ii), (iii), together with the condition (1) of assertion (i).

Finally, we verify assertion (v). Let $\alpha$ be an automorphism of $G_k$ as in Definition \ref{alpha}. 
[Note that since $p(G_k) = p_k = 2$, $a(G_k) = a_k = 2 > 1$ --- cf.\ \cite{Hoshi1}, Proposition 3.6; \cite{Hoshi2}, Proposition 2.5, (i) --- 
the group $G_k$ [of MLF-type] satisfies the assumption imposed on ``$G$'' in the discussion preceding Proposition  \ref{proposition:1.1}.] Write $\beta$ for the automorphism of the submodule $U \subseteq \mathscr{O}_k^\times$ obtained by forming the restriction of $\alpha_\times^n$ [cf.\ the condition (2) of assertion (i)]. Thus, since it follows from Theorem \ref{theorem:1.5} [cf.\ also \cite{Hoshi1}, Definition 3.10, (vi)] that
\begin{itemize}
  \item[$(\rm a^\dagger)$] the automorphism of $\mathscr{O}_k^\times \otimes_\mathbb{Z} \mathbb{Q}$ induced by $\alpha_\times^n$ is not the identity automorphism, but
  \item[$(\rm b^\dagger)$] the image of the square of the endomorphism of $\mathscr{O}_k^\times \otimes_\mathbb{Z} \mathbb{Q}$ induced by the endomorphism of $\mathscr{O}_k^\times$ given by ``$a \mapsto \alpha_\times^n(a) \cdot a^{-1}$'' consists of the identity element of $\mathscr{O}_k^\times \otimes_\mathbb{Z} \mathbb{Q}$,
\end{itemize}
one concludes that
\begin{itemize}
  \item[$(\rm a)$] the automorphism $\beta$ is not the identity automorphism of $U$, but
  \item[$(\rm b)$] the image of the square of the endomorphism of $U$ given by ``$a \mapsto \beta(a) \cdot a^{-1}$'' consists of the identity element of $U$.
\end{itemize}
Moreover, it follows immediately from Lemma \ref{lemma:2.2}, (i), that
\begin{itemize}
  \item[$(\rm c)$] the automorphism $\beta$ preserves the submodule $U \cap \mathrm{Ker}(\mathrm{Nm})$ of $U$.
\end{itemize}

Thus, it follows immediately from assertion (iv) [cf.\ also the condition (1) of assertion (i)], together with (b) and (c), that if the automorphism $\beta$ preserves some open submodule of the submodule $U \cap \mathscr{O}_{k^{(d=1)}}^\times$, then $\beta$ is the identity automorphism of $U$ --- in contradiction to (a). In particular, the automorphism $\beta$ does not preserve any open submodule of the submodule $U \cap \mathscr{O}_{k^{(d=1)}}^\times$, which thus implies [cf.\ assertion (ii)] that
\[
\beta(U \cap \mathscr{O}_{k^{(d=1)}}^\times) \cap U \cap \mathscr{O}_{k^{(d=1)}}^\times = \{1\}.
\]
Thus, it follows immediately from \cite{Hoshi1}, Lemma 1.2, (i), that
\[
\alpha_\times^n(\mathscr{O}_{k^{(d=1)}}^\times) \cap \mathscr{O}_{k^{(d=1)}}^\times
\]
is not open in $\mathscr{O}_{k^{(d=1)}}^\times$, as desired. This completes the proof of assertion (v), hence also of Lemma \ref{lem:3.1}. 
\end{proof}

\begin{prop}\label{prop:3.2}
Let $k$ be a mixed-characteristic local field containing a primitive 4-th root of unity such that $p_k=2$. 
Suppose, moreover, that $k$ is absolutely abelian, i.e., that $k$ is absolutely Galois, and the Galois group $\mathrm{Gal}(k/k^{(d=1)})$ is abelian [cf.~ \cite{Hoshi2}, Definition 4.2, (ii)]. Then there exists an automorphism $\alpha$ of $G_k$ such that, for an arbitrary nonzero integer $n$, if one writes $\alpha_\times^n$ for the automorphism of $\mathscr{O}_k^\times$ induced by $\alpha^n$,
then the intersection
\[
\alpha_\times^n(\mathscr{O}_{k^{(d=1)}}^\times) \cap \mathscr{O}_{k^{(d=1)}}^\times
\]
is not open in $\mathscr{O}_{k^{(d=1)}}^\times$. 
\end{prop}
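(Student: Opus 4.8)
The plan is to descend along the tower $k^{(d=1)} \subseteq k' \subseteq k$ and thereby reduce the assertion to the quadratic case settled in Lemma~\ref{lem:3.1}, (v). Here I take $k'$ to be the quadratic extension of $k^{(d=1)}$ obtained by adjoining a primitive $4$-th root of unity in $k$ [cf.\ the field ``$k'$'' of Lemma~\ref{old:lemma:2.7}]; since $k$ contains a primitive $4$-th root of unity and $p_k = 2$, such a $k'$ is a quadratic mixed-characteristic local field with $p_{k'} = 2$, so that $k'$ is precisely a field to which Lemma~\ref{lem:3.1} applies. First I would invoke Lemma~\ref{lem:3.1}, (v), to obtain an automorphism $\beta$ of $G_{k'}$ --- concretely, the automorphism $\nswaut$ of Definition~\ref{alpha} attached to the group $G_{k'}$ of MLF-type --- with the property that, for every nonzero integer $n$, the intersection $\beta^n_\times(\CalO^\times_{k^{(d=1)}}) \cap \CalO^\times_{k^{(d=1)}}$, formed inside $\CalO^\times_{k'}$, is not open in $\CalO^\times_{k^{(d=1)}}$, where $\beta_\times$ denotes the automorphism of $\CalO^\times_{k'}$ induced by $\beta$.

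Next I would transport $\beta$ to $G_k$. Since $a_k \geq 2$ [because $k$ contains a primitive $4$-th root of unity] and $k$ is absolutely abelian, Lemma~\ref{old:lemma:2.7}, (i), guarantees that $G_k$ is a characteristic subgroup of $G_{k'}$ and supplies the homomorphism $\phi \colon \Aut(G_{k'}) \to \Aut(G_k)$; I set $\alpha \defeq \phi(\beta)$, so that $\alpha$ is the automorphism of $G_k$ obtained by restricting $\beta$.

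The crux of the argument --- and the step I expect to be the main obstacle --- is the compatibility assertion that the automorphism $\alpha_\times$ of $\CalO^\times_k$ induced by $\alpha$ preserves the submodule $\CalO^\times_{k'} \subseteq \CalO^\times_k$ and restricts on it to $\beta_\times$. This is the multiplicative counterpart of the additive compatibility exploited in the proof of Lemma~\ref{old:lemma:2.7}, (ii). I would establish it from the functoriality of the assignment ``$\CalO^\times(-)$'' together with the naturality of the transfer (Verlagerung) homomorphism $G_{k'}^{\mathrm{ab}} \to G_k^{\mathrm{ab}}$, which, under the reciprocity isomorphisms of local class field theory, corresponds to the inclusion $\iota \colon \CalO^\times_{k'} \hookrightarrow \CalO^\times_k$: as $\beta$ preserves the subgroup $G_k$, naturality of the transfer map with respect to the pair $(\beta, \beta|_{G_k})$ yields the commuting relation $\alpha_\times \circ \iota = \iota \circ \beta_\times$, which is exactly the desired compatibility. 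The delicate point is to ensure that this class-field-theoretic functoriality is genuinely compatible with the group-theoretic reconstruction of ``$\CalO^\times(-)$'' through which $\alpha_\times$ and $\beta_\times$ are defined.

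Granting this compatibility, I would conclude as follows. For each nonzero integer $n$ we have $\alpha^n_\times|_{\CalO^\times_{k'}} = \beta^n_\times$, hence $\alpha^n_\times(\CalO^\times_{k^{(d=1)}}) = \beta^n_\times(\CalO^\times_{k^{(d=1)}})$ as subsets of $\CalO^\times_{k'} \subseteq \CalO^\times_k$. Since both this set and $\CalO^\times_{k^{(d=1)}}$ lie in $\CalO^\times_{k'}$, the intersection $\alpha^n_\times(\CalO^\times_{k^{(d=1)}}) \cap \CalO^\times_{k^{(d=1)}}$ computed in $\CalO^\times_k$ coincides with the one computed in $\CalO^\times_{k'}$; by the choice of $\beta$ the latter is not open in $\CalO^\times_{k^{(d=1)}}$, and openness in $\CalO^\times_{k^{(d=1)}}$ is intrinsic to $\CalO^\times_{k^{(d=1)}}$, so the asserted non-openness follows. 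This would complete the proof. Note that when $d_k = 2$ one has $k = k'$ and $\phi$ is the identity, so the argument specializes uniformly to Lemma~\ref{lem:3.1}, (v).
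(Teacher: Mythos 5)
Your proposal is correct and takes essentially the same route as the paper: the paper likewise reduces to the quadratic case $d_k = 2$ via the fact that $G_k$ is characteristic in $G_{k'}$ [citing the implication (1) $\Rightarrow$ (2) of Hoshi's Theorem F, (i), rather than Lemma~\ref{old:lemma:2.7}, (i), but this is the same ingredient], transports the automorphism to $G_k$, and then invokes Lemma~\ref{lem:3.1}, (v). The only difference is one of explicitness: where the paper dispatches the multiplicative compatibility $\alpha_\times \circ \iota = \iota \circ \beta_\times$ as ``a similar argument to the argument applied in the proof of Lemma~\ref{old:lemma:2.7}, (ii)'', you spell it out via the naturality of the transfer homomorphism $G_{k'}^{\mathrm{ab}} \to G_k^{\mathrm{ab}}$ under the reciprocity isomorphisms, which is indeed the correct mechanism underlying that step.
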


\begin{proof}
Let us first observe that since $k$ contains a primitive 4-th root of unity, it is immediate that $k$ contains the quadratic mixed-characteristic local field obtained by adjoining a primitive 4-th root of unity to $k^{(d=1)}$. 
Moreover, since $k$ is absolutely abelian, it follows immediately from the implication (1) $\Rightarrow$ (2) of \cite{Hoshi2}, Theorem F, (i), that $G_k$ is a characteristic subgroup of the absolute Galois group of the quadratic extension of $k^{(d=1)}$ determined by the algebraic closure $\bar{k}$. Thus, one may conclude that we may assume without loss of generality --- by applying a similar argument to the argument applied in the proof of Lemma \ref{old:lemma:2.7}, (ii), and replacing $k$ by the quadratic extension of $k^{(d=1)}$ --- that $d_k = 2$. On the other hand, if $d_k = 2$, then the desired conclusion follows form Lemma \ref{lem:3.1}, (v). This completes the proof of Proposition \ref{prop:3.2}.
\end{proof}

\begin{theorem} \label{thm:4.3}
Let $k$ be a mixed-characteristic local field containing a primitive 4-th root of unity such that $p_k=2$. 
Assume, moreover, that $k$ is absolutely abelian. 
Then there exist a $\mathbb{Q}_{p_k}$-vector space $V$ of dimension $d_k$ and a continuous representation $\rho : \mathrm{Gal}(\overline{k}/k) \to \mathrm{Aut}_{\mathbb{Q}_{p_k}}(V)$ that is irreducible, abelian, crystalline 
[hence also Hodge-Tate], but not Aut-intrinsically Hodge-Tate [cf.\ \cite{Hoshi_intrinsic_Hodge-Tate}, Definition 1.3].
\end{theorem}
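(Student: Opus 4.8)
The plan is to construct the representation $\rho$ explicitly from an induced character and then use Proposition~\ref{prop:3.2} to exhibit an automorphism that destroys the Hodge-Tate property. First I would reduce to the case $d_k = 2$ exactly as in the proof of Proposition~\ref{prop:3.2}: since $k$ is absolutely abelian and contains a primitive $4$-th root of unity, $G_k$ is a characteristic subgroup of the absolute Galois group of the quadratic extension $k'$ of $k^{(d=1)}$ obtained by adjoining a primitive $4$-th root of unity, so it suffices to produce the desired $V$ and $\rho$ for $k'$ and then observe that the characteristic-subgroup property transports both the representation and the obstructing automorphism down. Hence I would assume $d_k = 2$ and construct a two-dimensional representation of $G_{k^{(d=1)}}$ (or, symmetrically, work with a character of $G_k$ that does not descend).

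The key construction is to take a character $\chi \colon G_k^{\mathrm{ab}} \to \mathbb{Q}_{p_k}^\times$ (equivalently, via $\mathrm{rec}_k$, a character of $(k^\times)^\wedge$) that is Hodge-Tate — indeed crystalline — and whose restriction to $\mathcal{O}_k^\times$ is governed by a nontrivial embedding $k \hookrightarrow \overline{\mathbb{Q}}_{p_k}$, so that the Hodge-Tate weight with respect to a given embedding records which $p_k$-adic component of $\mathcal{O}_k^\times$ the character reads off. I would then set $V \defeq \mathrm{Ind}_{G_k}^{G_{k^{(d=1)}}} \chi$, a $\mathbb{Q}_{p_k}$-vector space of dimension $d_k = 2$, and verify that $\rho$ is irreducible (by choosing $\chi$ not to be $\mathrm{Gal}(k/k^{(d=1)})$-invariant), abelian (since $k$ is absolutely abelian, the image of $\rho$ lies in an abelian group after a suitable identification, or one argues via the abelian structure directly), and crystalline (induction of a crystalline character along an unramified-plus-tame extension preserves crystallinity, or one checks this at the level of Fontaine's functors). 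The Hodge-Tate weights of $\rho$ are then read off from the behavior of $\chi$ on $\mathcal{O}_{k^{(d=1)}}^\times \subseteq \mathcal{O}_k^\times$ via the norm/trace compatibilities of Lemma~\ref{lemma:2.2}.

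The decisive step is to apply the automorphism $\alpha$ furnished by Proposition~\ref{prop:3.2}: for every nonzero integer $n$, the automorphism $\alpha_\times^n$ of $\mathcal{O}_k^\times$ has the property that $\alpha_\times^n(\mathcal{O}_{k^{(d=1)}}^\times) \cap \mathcal{O}_{k^{(d=1)}}^\times$ is not open in $\mathcal{O}_{k^{(d=1)}}^\times$. I would argue that the Hodge-Tate-ness of $\rho \circ \alpha^n$ is equivalent to a compatibility between the Hodge-Tate grading of $\chi$ and the subfield $k^{(d=1)}$ — concretely, that $\rho \circ \alpha^n$ being Hodge-Tate forces $\alpha_\times^n$ to carry the relevant component $\mathcal{O}_{k^{(d=1)}}^\times$ to (an open subgroup of) itself, because the Hodge-Tate decomposition is detected on the inertia action, which via $\mathrm{rec}_k$ is exactly the action on $\mathcal{O}_k^\times$. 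Since $\alpha_\times^n$ moves $\mathcal{O}_{k^{(d=1)}}^\times$ off any open subgroup of itself, $\rho \circ \alpha^n$ fails to be Hodge-Tate, so $\rho$ is not Aut-intrinsically Hodge-Tate, as desired.

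The main obstacle I anticipate is the precise translation between the abstract group-theoretic statement of Proposition~\ref{prop:3.2} — phrased entirely in terms of $\alpha_\times^n$ acting on $\mathcal{O}_k^\times$ — and the Hodge-theoretic assertion that $\rho \circ \alpha^n$ is not Hodge-Tate. The hard part is pinning down the dictionary between Hodge-Tate weights of the induced character and the image of $\mathcal{O}_{k^{(d=1)}}^\times$ under $\alpha_\times^n$, so that ``$\alpha_\times^n$ does not preserve $\mathcal{O}_{k^{(d=1)}}^\times$ up to open subgroups'' translates cleanly into ``the Hodge-Tate grading is destroyed.'' I expect this to require a careful analysis, via local class field theory and Sen's theory, of how the Hodge-Tate weight of a de~Rham character is encoded in its restriction to the units, together with the observation that an automorphism which distorts the $k^{(d=1)}$-rational structure of the units must correspondingly distort the Hodge-Tate grading; the remaining verifications (irreducibility, crystallinity, and the reduction to $d_k = 2$) I expect to be comparatively routine.
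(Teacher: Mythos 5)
Your core mechanism --- a Lubin--Tate-type character whose Hodge--Tate property encodes the rational structure of $\mathcal{O}_{k^{(d=1)}}^{\times}$, to be destroyed by the automorphism of Proposition~\ref{prop:3.2} --- is indeed the mechanism of the paper's proof (via \cite{Hoshi_intrinsic_Hodge-Tate}, Lemma 1.9). But your packaging of the representation contains a genuine gap. You set $V = \mathrm{Ind}_{G_k}^{G_{k^{(d=1)}}}\chi$, which is a representation of $G_{k^{(d=1)}}$, whereas the theorem demands a representation of $\mathrm{Gal}(\overline{k}/k) = G_k$ of dimension $d_k$. This is not cosmetic: the obstructing automorphism $\alpha$ furnished by Proposition~\ref{prop:3.2} is an automorphism of $G_k$, so the composite $\rho \circ \alpha^n$ is simply undefined for your $\rho$; and you cannot trade $\alpha$ for an automorphism of $G_{k^{(d=1)}}$, since for $d(G) = 1$ every automorphism acts trivially on $G^{\mathrm{ab}}$, on $k^{\times}(G)$, and on $k_+(G)$ [cf.\ Remark~\ref{remark:2.13}], so the mechanism of ``moving $\mathcal{O}_{k^{(d=1)}}^{\times}$'' is unavailable over the base field. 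Moreover, irreducibility of the induced representation forces $\chi \neq \chi^{\sigma}$ (where $\sigma$ is a nontrivial element of $\mathrm{Gal}(k/k^{(d=1)})$), and then the image of $\mathrm{Ind}\,\chi$ is nonabelian of dihedral type, contradicting the required abelianness of $\rho$; your suggestion that absolute abelianness of $k$ rescues this conflates the abelianness of $\mathrm{Gal}(k/k^{(d=1)})$ with that of the image of $\rho$. Restricting the induced representation back down to $G_k$ does not repair matters: the restriction is of the form $\chi \oplus \chi^{\sigma}$, hence reducible, and of dimension $2$ rather than $d_k$. Your reduction step suffers the same defect --- restricting a two-dimensional representation of $G_{k'}$ to the characteristic subgroup $G_k$ transports neither the dimension $d_k$ nor irreducibility.

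The paper avoids all of this by never inducing: it takes $V = k_+$ itself and defines $\rho$ as the composite $G_k \twoheadrightarrow G_k^{\mathrm{ab}} \xrightarrow{\chi_{\pi,\mathrm{id}_k}} \mathcal{O}_k^{\times} \hookrightarrow \mathrm{Aut}_{\mathbb{Q}_{p_k}}(k_+)$, i.e., the $\mathcal{O}_k^{\times}$-valued character of \cite{Hoshi_intrinsic_Hodge-Tate}, Definition 1.7, acting on $k_+$ by multiplication. This is of dimension $d_k$ over $\mathbb{Q}_{p_k}$ by construction, abelian because it factors through $G_k^{\mathrm{ab}}$, irreducible because its image generates $k$ as a $\mathbb{Q}_{p_k}$-algebra, and crystalline by \cite{Serre1998}, Chapter III, \S A.4, Proposition 5; no reduction to $d_k = 2$ is needed at the level of representations, since Proposition~\ref{prop:3.2} already supplies $\alpha \in \mathrm{Aut}(G_k)$ for an arbitrary absolutely abelian $k$ (the reduction to $d_k = 2$ happens once, inside that proposition, for the automorphism alone). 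The dictionary you anticipated having to build by hand via Sen's theory --- that non-preservation of $\mathcal{O}_{k^{(d=1)}}^{\times}$ up to open subgroups forces failure of the Hodge--Tate property --- is exactly the content of \cite{Hoshi_intrinsic_Hodge-Tate}, Lemma 1.9, applied via the observation that $\chi_{\pi,\mathrm{id}_k} \circ \mathrm{rec}_k$ restricts to an automorphism of $\mathcal{O}_{k^{(d=1)}}^{\times} \subseteq \mathcal{O}_k^{\times}$. To repair your argument, replace the induction by this ``$k^{\times}$-valued character viewed as a $d_k$-dimensional $\mathbb{Q}_{p_k}$-linear representation'' construction, after which your final step goes through as stated.
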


\begin{proof}
Let $\pi \in \mathscr{O}_k$ be a uniformizer of $\mathscr{O}_k$. Write $\rho$ for the continuous representation of $G_k$ [necessarily of dimension $d_k$] obtained by forming the composite
\[
G_k \xtwoheadrightarrow[]{} G_k^{\mathrm{ab}} \xrightarrow{\chi_{\pi,\mathrm{id}_k}} \mathscr{O}_k^\times \xhookrightarrow{\qquad} \mathrm{Aut}_{\mathbb{Q}_{p_k}}(k_+)
\]
— where the first arrow is the natural surjective homomorphism, 
the second arrow is the character defined in \cite{Hoshi_intrinsic_Hodge-Tate}, Definition 1.7, 
and the third arrow is the natural inclusion. Then one verifies easily that this continuous representation $\rho$ is irreducible and abelian. Moreover, it follows immediately from \cite{Serre1998}, Chapter III, §A.4, Proposition 5, that this continuous representation $\rho$ is crystalline.

Next, to verify that the continuous representation $\rho$ is not Aut-intrinsically Hodge-Tate, let us recall that it follows immediately from the various definitions involved that the composite
\[
\mathscr{O}_k^{\times} \xhookrightarrow{\mathrm{rec}_k} G_k^{\mathrm{ab}} \xrightarrow{\chi_{\pi,\mathrm{id}_k}} \mathscr{O}_k^{\times}
\]
is an automorphism that restricts to an automorphism of the submodule $\mathscr{O}_{k^{(d=1)}}^\times \subseteq \mathscr{O}_k^\times$. In particular, if $\alpha$ is an automorphism of $G_k$ as in Proposition \ref{prop:3.2}, then it follows immediately from \cite{Hoshi_intrinsic_Hodge-Tate}, Lemma 1.9, together with the various definitions involved, that the composite $\rho \circ \alpha : G_k \xrightarrow{\sim} G_k \to \mathrm{Aut}_{\mathbb{Q}_{p_k}}(k_+)$ is not Hodge-Tate, which thus implies that the continuous representation $\rho$ is not Aut-intrinsically Hodge-Tate, as desired. This completes the proof of Theorem \ref{thm:4.3}. 
\end{proof}

\begin{corollary} \label{cor:4.4}
There exist a mixed-characteristic local field $K$ such that $p_K = 2$, an algebraic closure $\overline{K}$ of $K$, a $\mathbb{Q}_{p_K}$-vector space $V$ of dimension 2, and a continuous representation $\rho : \mathrm{Gal}(\overline{K}/K) \to \mathrm{Aut}_{\mathbb{Q}_{p_K}}(V)$ that is irreducible, abelian, crystalline [hence also Hodge-Tate], but not Aut-intrinsically Hodge-Tate.
\end{corollary}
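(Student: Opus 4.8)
The plan is to deduce this existence statement directly from Theorem~\ref{thm:4.3} by exhibiting a single explicit field $K$ that satisfies all the hypotheses of that theorem and, in addition, has $d_K = 2$ --- so that the dimension-$d_K$ vector space produced by the theorem has dimension exactly $2$.

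First I would take $K \defeq \mathbb{Q}_2(i)$, where $i$ denotes a primitive $4$-th root of unity, and fix an algebraic closure $\overline{K}$ of $K$. I would then check the hypotheses of Theorem~\ref{thm:4.3} in turn. Since $-1$ is not a square in $\mathbb{Q}_2$ (indeed $-1 \equiv 7 \pmod 8$), the polynomial $X^2 + 1$ is irreducible over $\mathbb{Q}_2$, so $K/\mathbb{Q}_2$ is a genuine quadratic extension; in particular $K$ is a mixed-characteristic local field with $p_K = 2$ and $k^{(d=1)} = \mathbb{Q}_2$, whence $d_K = [K : \mathbb{Q}_2] = 2$. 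By construction $K$ contains the primitive $4$-th root of unity $i$. Finally, every quadratic extension is Galois with Galois group isomorphic to $\mathbb{Z}/2\mathbb{Z}$, which is abelian, so $K$ is absolutely abelian. Hence $K$ meets all the hypotheses of Theorem~\ref{thm:4.3}.

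Applying Theorem~\ref{thm:4.3} to $K$ then produces a $\mathbb{Q}_{p_K}$-vector space $V$ of dimension $d_K = 2$ together with a continuous representation $\rho : \mathrm{Gal}(\overline{K}/K) \to \mathrm{Aut}_{\mathbb{Q}_{p_K}}(V)$ that is irreducible, abelian, and crystalline (hence Hodge-Tate) but not Aut-intrinsically Hodge-Tate --- precisely the assertion of Corollary~\ref{cor:4.4}. I do not expect any genuine obstacle at this stage: the entire mathematical content lies in Theorem~\ref{thm:4.3}, and the only remaining task is to confirm that its hypotheses are non-vacuous in the dimension-$2$ case, which the explicit choice $K = \mathbb{Q}_2(i)$ settles immediately.
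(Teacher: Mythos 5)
Your proposal is correct and matches the paper's argument: the paper simply states that the corollary is a formal consequence of Theorem~\ref{thm:4.3}, with the implicit instantiation being exactly a quadratic field such as $K = \mathbb{Q}_2(i)$, whose hypotheses ($p_K = 2$, containing $i$, absolutely abelian, $d_K = 2$) you verify correctly. Your version merely makes explicit the routine checks the paper leaves to the reader.
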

\begin{proof}
This assertion is a formal consequence of Theorem \ref{thm:4.3}. 
\end{proof}

\begin{acknowledgements}
The author would like to thank Yuichiro Hoshi for a discussion related to this thesis, and would like to express deepest gratitude to my partner, K. Nishio for providing constant support and warm encouragement.
\end{acknowledgements}

\printbibliography

\end{document}